\newtheorem{theorem}{Theorem}[section]
\newtheorem{lemma}[theorem]{Lemma}
\newtheorem{corollary}[theorem]{Corollary}
\theoremstyle{definition}
\newtheorem{definition}[theorem]{Definition}
\theoremstyle{remark}
\newtheorem{remark}[theorem]{Remark}
\numberwithin{equation}{section}
\begin{document}


\title[Eigenvalue Inequalities via ABP Method]{Eigenvalue Inequalities for Fully Nonlinear Elliptic Equations via the Alexandroff-Bakelman-Pucci Method}


\author{Dimitrios Gazoulis}
\address{Department of Mathematics, University of Athens (EKPA), Panepistemiopolis, 15784 Athens,
Greece}
\email{dgazoulis@math.uoa.gr}






\keywords{Eigenvalue Inequalities, Alexandroff-Bakelman-Pucci Method, Fully Nonlinear Elliptic Equations, $ L^{\infty} $ gradient bounds}

\subjclass{35P15, 35A25, 35J15, 35J96}

\maketitle



\begin{abstract} In this work we establish eigenvalue inequalities for elliptic differential operators either for Dirichlet or for Robin eigenvalue problems, by using the technique introduced by Alexandroff, Bakelman and Pucci. These inequalities can be extended for fully nonlinear elliptic equations, such as for the Monge-Ampère equation and for Pucci's equations. As an application we establish a lower bound for the $ L^p -$norm of the Laplacian and this bound is sharp, in the sense that, when equality is achieved then a symmetry property is obtained. In addition, we obtain an $ L^{\infty} $ bound for the gradient of solutions to fully nonlinear elliptic equations and as a result, a $ C^3 $ estimate.

\end{abstract}



\section{Introduction}

In this paper, we present a bound for subsolutions of elliptic differential equations by using the technique introduced by Alexandroff, Bakelman and Pucci to establish their ABP estimate. This bound is in terms of the normal derivative in the boundary of a given domain. As a result we derive eigenvalue inequalities for elliptic differential operators either for Dirichlet or for Robin eigenvalue problems. These bounds can be extended to fully nonlinear equations, such as for the Monge-Ampère equation and for Pucci's equation.

The Alexandroff, Bakelman and Pucci estimate (or ABP estimate) is an $ L^{\infty} $ bound for subsolutions of the elliptic problem
\begin{equation}\label{ABPEllipticSubsolutionProblem}
\begin{gathered}
Lu \geq f \\
\textrm{where} \;\: Lu := \sum_{i,j}a_{ij}(x) \partial_{x_i x_j}u + \sum_i b_i(x) \partial_{x_i}u + c(x) u \;,
\end{gathered}
\end{equation} 
with $ a_{ij}, b_i $ bounded and measurable functions in a bounded domain $ \Omega \subset \mathbb{R}^n $ and $ c \leq 0 $. Then
\begin{equation}\label{ABPestimateStatement}
\sup_{\Omega} u \leq \sup_{\partial \Omega} u^+ + C \: diam(\Omega) \: || L u ||_{L^n(\Omega)} \;,
\end{equation}
for every function $ u \in C^2 ( \Omega) \cap C( \overline{\Omega}) $. The constant $ C $ depends only on the coefficients of $ L $ and $ diam(\Omega) $ denotes the diameter of $ \Omega $ (see section 2.5 in \cite{HL} for further details).

The ABP estimate has several extensions to more general problems, such as for viscosity solutions to fully nonlinear elliptic equations. It is also a basic tool in the regularity theory for fully nonlinear elliptic equations of the form
\begin{align*}
F( \nabla^2 u) =0 \;,
\end{align*}
see for instance \cite{CC}.

In this work, motivated by the classical techniques for proving the ABP estimate, together with the techniques in \cite{XCabre} and \cite{Cabre2},
we establish Theorem \ref{ThmNormalDerABPestimate}, that is an estimate of the form
\begin{equation}\label{NormalDerEstimate}
\inf_{\partial \Omega} \left| \frac{\partial u}{\partial \nu} \right| \leq C \: \left\Vert \frac{f^-}{D^*} \right\Vert_{L^n(\Omega)} \;,
\end{equation}
for every function $ u \in C^2 ( \Omega) \cap C^1( \overline{\Omega}) $ that satisfies \eqref{ABPEllipticSubsolutionProblem}. The constant $ C $ depends only on $ n, \; diam(\Omega) $ and the coefficients of $ L $ and $ D^* = (det(A))^{1/n} $.

Furthermore, X. Cabré in \cite{XCabre} and \cite{Cabre2} gave an elegant proof of the isoperimetric inequality using the ABP technique. The isoperimetric inequality says that among all domains of given volume the ball has the smallest perimeter. Utilizing similar arguments we obtain a lower bound for the $ L^n- $norm of the Laplacian in terms of the normal derivative on the boundary of a given domain, that is, Theorem \ref{ThmLowerBoundForSubharmonic}, which states that
\begin{equation}\label{LBLapl}
|| \Delta u ||_{L^n(\Omega)} \geq n \alpha  \: | B_1 |^{1/n} \;,
\end{equation}
whenever $ \dfrac{\partial u}{\partial \nu} \geq \alpha >0 $ on $ \partial \Omega $. In particular, if $ \dfrac{\partial u}{\partial \nu}= \alpha >0 $ on $ \partial \Omega $, we have
\begin{equation}\label{LBLapl_p}
|| \Delta u ||_{L^p(\Omega)} \geq \alpha \left( \frac{n^{n(p-1)} |B_1|^{p-1}}{|\partial \Omega|^{p-n}} \right)^{\frac{1}{p(n-1)}} \;\:,\;\: \textrm{for any} \;\: p \geq n,
\end{equation}
(see Theorem \ref{ThmLowerBoundForSubharmonic}) and by taking the limit as $ p \rightarrow + \infty $, this bound becomes
\begin{equation}\label{LBLapl_inf}
|| \Delta u ||_{L^{\infty}(\Omega)} \geq \alpha \left( \frac{n^n |B_1|}{|\partial \Omega|} \right)^{\frac{1}{n-1}}.
\end{equation}
A direct corollary of inequalities \eqref{LBLapl}, \eqref{LBLapl_p} and \eqref{LBLapl_inf} is a symmetry property when the equality in these estimates is achieved, in the spirit of the work of Serrin in \cite{Serrin}.

There are additional applications of the estimate in \eqref{LBLapl}, for instance, when it is applied to semilinear equations of the form
\begin{equation}\label{SemilinearEq}
\Delta u = f(u) \;,
\end{equation}
we obtain an $ L^{\infty} $ bound for the gradient of the solutions, see Theorem \ref{ThmLinftyBoundForGradient}.
We present these results in section 2, where we also obtain a $ C^3 $ estimate as application of this $ L^{\infty} $ bound. 

The estimate \eqref{NormalDerEstimate} can be established for the Monge-Ampère equation and we similarly derive eigenvalue inequalities. In particular, in section 3 we address the eigenvalue problem
\begin{equation}\label{MongeAmpEigenvIntro}
\begin{cases}
det ( \nabla^2 u) = \lambda | u |^n \;\;\;,\; \textrm{in}  \;\; \Omega \\ \;\;\;\;\;\;\; u = 0 \;\;\;\;\;\;\;\;\;\;\;\;\;\;,\;  \textrm{on} \;\; \partial \Omega
\end{cases}
\end{equation}
We note that also in \cite{Cabre1}, eigenvalue inequalities have been proved via the ABP method for the first eigenvalue of an elliptic operator.

Finally, in section 4, we extend some of these results to viscosity solutions of fully nonlinear elliptic equations. Particularly, Theorem \ref{ThmNormalDerABPfullyNonlinearOmega} is an estimate of the form \eqref{NormalDerEstimate}, from which we establish the analogous eigenvalue inequalities for both Dirichlet and Robin eigenvalue problems and in Theorem \ref{ThmL^inftyBoundonGradFullyNonlinear}, an $ L^{\infty} $ bound for the gradient of smooth solutions is derived. This $ L^{\infty} $ bound gives a $ C^3 $ estimate for the solutions as in the case of the Laplacian in section 2.

$ \\ $

\section{A Bound for the normal derivative of subsolutions to Linear Elliptic equations}

Suppose $ \Omega $ is an open, bounded domain in $ \mathbb{R}^n $ and denote as $ \nu $ the unit normal pointing outwards. Throughout this section will denote as $ L $ a linear elliptic differential operator of the form
\begin{equation}\label{EllipticDifOperator}
Lu:= \sum_{i,j}a_{ij}(x) \partial_{x_i x_j}u + \sum_i b_i(x) \partial_{x_i}u + c(x) u \;,
\end{equation}
where $ a_{ij} \;,\: b_i $ and $ c $ are continuous in $ \Omega \;,\; c \leq 0 $ and $ L $ is uniformly elliptic in $ \Omega $ in the following sense:
\begin{equation}\label{EllipticityConditionforL}
\theta | \xi |^2 \leq \sum_{i,j} a_{ij}(x) \xi_i \xi_j \leq \Theta | \xi |^2 \;\;\;,\;\; \forall \;\: x \in \Omega \; \textrm{and any} \;\: \xi \in \mathbb{R}^n.
\end{equation}
for some positive constants $ \theta \leq \Theta $. Consider also $ D^* = (det(A))^{1/n} $ where $ A=(a_{ij}) $ is positive definite in $ \Omega $ and $ 0 < \theta \leq D^* \leq \Theta $.

We also introduce the concept of contact sets. For $ u \in C^1(\Omega) $ we define
\begin{equation}\label{LowerContactSetΓ_u}
\Gamma_u : = \lbrace x \in \Omega \: : \: u(y) \geq u(x) + \nabla u(x) \cdot (y-x) \;,\: \forall \; y \in \overline{\Omega} \rbrace \;.
\end{equation}

The set $ \Gamma_u $ is called the \textit{lower contact} set of $ u $. It is the set of points $ x $ such as the tangent hyperplane to the graph $ u $ at $ x $ lies below $ u $ in all of $ \overline{\Omega} . $

Similarly we define the \textit{upper contact} set of $ u $,
\begin{equation}\label{UpperContactSetΓ^u}
\Gamma^u := \lbrace x \in \Omega \: : \: u(y) \leq u(x) + \nabla u(x) \cdot (y-x) \;,\: \forall \; y \in \overline{\Omega} \rbrace \;,
\end{equation}
$ \Gamma^u $ is the set of points $ x $ such as the tangent hyperplane to the graph $ u $ at $ x $ lies above $ u $ in all of $ \overline{\Omega} . $ We also denote as $ B_r(x_0) $ (or $ B_r $), the ball of radius $ r$ centered at some point $ x_0$.

To begin with, we state the following property of $ C^1 $ functions. We rely on the arguments in the proof of the isoperimetric inequality in \cite{Cabre2}.

\begin{lemma}\label{Lemma1}
Let $ u : \Omega \rightarrow \mathbb{R} $ and $ u \in C^1( \overline{\Omega} ) $ such that 
\begin{equation}
m:= \inf_{\partial \Omega} \left| \dfrac{\partial u}{\partial \nu} \right| >0  \;.
\end{equation}
Then either
\begin{equation}\label{nablauGamma_u}
B_m(0) \subset \nabla u ( \Gamma_u) \;\;,\;\: \textrm{if} \;\: \frac{\partial u}{\partial \nu} \bigm|_{\partial \Omega} \: > 0 \;,
\end{equation}
or
\begin{equation}
B_m(0) \subset \nabla u ( \Gamma^u) \;\;,\;\: \textrm{if} \;\: \frac{\partial u}{\partial \nu} \bigm|_{\partial \Omega} \: < 0 \;.
\end{equation}
\end{lemma}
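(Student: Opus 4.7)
The plan is to prove the first case ($\partial u/\partial \nu > 0$ on $\partial\Omega$) directly; the second case then follows by replacing $u$ with $-u$ (which swaps $\Gamma_u$ and $\Gamma^u$ and flips the sign of the normal derivative).

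Fix an arbitrary $p \in B_m(0)$ and consider the auxiliary function $v(y) := u(y) - p \cdot y$, which lies in $C^1(\overline{\Omega})$. The key observation is that on $\partial\Omega$,
\begin{equation*}
\frac{\partial v}{\partial \nu}(x) \;=\; \frac{\partial u}{\partial \nu}(x) - p \cdot \nu(x) \;\geq\; m - |p| \;>\; 0,
\end{equation*}
using that $\partial u/\partial\nu \geq m$ (since it is positive and bounded below by $m$ in absolute value), $|\nu| = 1$, and $|p| < m$. This strict positivity on the whole boundary will force the minimum of $v$ over $\overline{\Omega}$ to be attained in the interior.

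Indeed, by compactness $v$ attains its minimum at some $x_0 \in \overline{\Omega}$. If $x_0 \in \partial \Omega$, then moving a small distance $t>0$ inward from $x_0$ (i.e.\ to $x_0 - t\nu(x_0) \in \Omega$ for $t$ small) gives
\begin{equation*}
v(x_0 - t \nu(x_0)) \;=\; v(x_0) - t\, \frac{\partial v}{\partial \nu}(x_0) + o(t) \;<\; v(x_0),
\end{equation*}
contradicting minimality. Hence $x_0 \in \Omega$ and $\nabla v(x_0) = 0$, which yields $\nabla u(x_0) = p$. Moreover, the minimality $v(y) \geq v(x_0)$ for all $y \in \overline{\Omega}$ rewrites as
\begin{equation*}
u(y) \;\geq\; u(x_0) + p \cdot (y - x_0) \;=\; u(x_0) + \nabla u(x_0) \cdot (y - x_0) \quad \text{for all } y \in \overline{\Omega},
\end{equation*}
which is exactly the condition $x_0 \in \Gamma_u$. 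Thus $p = \nabla u(x_0) \in \nabla u(\Gamma_u)$, and since $p \in B_m(0)$ was arbitrary, $B_m(0) \subset \nabla u(\Gamma_u)$.

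The main subtlety to handle carefully is ensuring that the minimum of $v$ is genuinely interior: this uses the strict inequality $|p| < m$ together with the fact that $\partial u/\partial \nu$ has a \emph{definite sign} (uniformly positive, not merely nonzero) on $\partial\Omega$, which is what the hypothesis $m = \inf_{\partial\Omega}|\partial u/\partial\nu| > 0$ combined with the continuity of $\partial u/\partial\nu$ on a connected boundary, or with the explicit sign assumption stated, guarantees. For the second case, applying the above argument to $-u$ (whose lower contact set is $\Gamma^u$, and whose outward normal derivative is positive) immediately gives $B_m(0) \subset \nabla(-u)(\Gamma^u) = -\nabla u(\Gamma^u)$, and by symmetry of the ball under $p \mapsto -p$, we conclude $B_m(0) \subset \nabla u(\Gamma^u)$.
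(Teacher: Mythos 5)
Your proof is correct and follows essentially the same route as the paper: the same auxiliary linear shift $u(y) - p\cdot y$, the same use of the boundary sign of $\partial u/\partial\nu$ to force the minimizer into the interior, and the same identification of the minimizer as a lower contact point with gradient $p$. You merely spell out more explicitly the Taylor-expansion step ruling out a boundary minimum and the reduction of the second case to the first via $u \mapsto -u$, both of which the paper leaves implicit.
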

$ \\ $
\begin{proof}
Without loss of generality assume that $ \dfrac{\partial u}{\partial \nu} >0 $ on $ \partial \Omega $.

Let $ z \in B_m(0) $, where $ m= \inf_{\partial \Omega} \dfrac{\partial u}{\partial \nu} . \\ $
Consider the function
\begin{align*}
H(x) = u(x) - z \cdot x \;.
\end{align*}
So, there exists $ y \in \overline{\Omega} $ such that
\begin{equation}\label{ProofLemma1Eq1}
H (y) = \min_{x \in \overline{\Omega}} H(x) \;.
\end{equation}
This is, up to a sign, the Legendre transform of $ u $.

Now, we claim that $ y \in \Omega $. Indeed, if $ y \in \partial \Omega $, then 
\begin{equation}\label{ProofLemma1Eq2}
\frac{\partial H}{\partial \nu}(y) \leq 0 \;,
\end{equation}
that is,
\begin{equation}\label{ProofLemma1Eq3}
\frac{\partial u}{\partial \nu}(y) \leq z \cdot \nu \leq | z |< m \;,
\end{equation}
which contradicts the definition of $ m $.

Therefore $ y \in \Omega $, which means that $ H $ attains its minimum at an interior point of $ \Omega $.

Thus
\begin{equation}
\nabla u(y) = z \;\;\; \textrm{and} \;\: y \in \Gamma_u.
\end{equation}
\end{proof}
$ \\ $

\begin{lemma}\label{Lemma2} Suppose $ g \in L^1_{loc}(\mathbb{R}^n) $ is nonnegative. Then for any $ u \in C^2(\Omega) \cap C^1(\overline{\Omega}) $ such that $ m:= \inf_{\partial \Omega} \left| \dfrac{\partial u}{\partial \nu} \right| >0 $, there holds either
\begin{equation}\label{Lemma2Statement1}
\int_{B_m(0)} g \leq \int_{\Gamma_u} g(\nabla u) | det(\nabla^2 u) | \;\;,\;\: \textrm{if} \;\: \frac{\partial u}{\partial \nu} \bigm|_{\partial \Omega} \: > 0 \;,
\end{equation}
or
\begin{equation}\label{Lemma2Statement2}
\int_{B_m(0)} g \leq \int_{\Gamma^u} g(\nabla u) | det(\nabla^2 u) | \;\;,\;\: \textrm{if} \;\: \frac{\partial u}{\partial \nu} \bigm|_{\partial \Omega} \: < 0 \;.
\end{equation}
\end{lemma}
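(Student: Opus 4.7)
The plan is to deduce Lemma \ref{Lemma2} directly from Lemma \ref{Lemma1} by applying the area formula to the gradient map $\nabla u$ restricted to the appropriate contact set. Without loss of generality I would treat only the case $\partial u / \partial \nu > 0$ on $\partial \Omega$, since the other case is identical after changing $u$ to $-u$ (which swaps $\Gamma_u$ with $\Gamma^{-u} = \Gamma^u$, and leaves $|\det(\nabla^2 u)|$ invariant).

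The first step is the observation that on $\Gamma_u$ the Hessian $\nabla^2 u$ is positive semi-definite. Indeed, if $x \in \Gamma_u$ then $u(x+te) - u(x) - t\, \nabla u(x)\cdot e \geq 0$ for every unit vector $e$ and every sufficiently small $t$, and dividing by $t^2/2$ and letting $t \to 0$ gives $e^T \nabla^2 u(x) e \geq 0$. In particular $\det(\nabla^2 u) \geq 0$ on $\Gamma_u$, so the absolute value in the statement is redundant on this set, and the Jacobian determinant of the $C^1$ map $\nabla u : \Omega \to \mathbb{R}^n$ equals $|\det(\nabla^2 u)|$ at every point of $\Gamma_u$.

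The second step is to invoke the area formula for the $C^1$ map $\nabla u$ on the (measurable) set $\Gamma_u$: for any nonnegative Borel $g$,
\begin{equation*}
\int_{\Gamma_u} g(\nabla u(x))\, |\det(\nabla^2 u(x))|\, dx \;=\; \int_{\mathbb{R}^n} g(y)\, \mathcal{N}(y)\, dy,
\end{equation*}
where $\mathcal{N}(y) = \#\{ x \in \Gamma_u : \nabla u(x) = y\}$. Since $\mathcal{N}(y) \geq 1$ precisely on $\nabla u(\Gamma_u)$, and Lemma \ref{Lemma1} guarantees $B_m(0) \subset \nabla u(\Gamma_u)$, the nonnegativity of $g$ gives
\begin{equation*}
\int_{\mathbb{R}^n} g(y)\,\mathcal{N}(y)\, dy \;\geq\; \int_{\nabla u(\Gamma_u)} g(y)\, dy \;\geq\; \int_{B_m(0)} g(y)\, dy,
\end{equation*}
which is exactly \eqref{Lemma2Statement1}.

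The only nontrivial point is the legitimacy of the area formula in this setting, but this is standard for $C^1$ maps (here $\nabla u$ is $C^1$ on $\Omega$ because $u \in C^2(\Omega)$), with $g \circ \nabla u$ locally bounded on $\overline{\Omega}$ since $\nabla u(\overline{\Omega})$ is a compact subset of $\mathbb{R}^n$ and $g \in L^1_{loc}(\mathbb{R}^n)$. If one wishes to avoid quoting the area formula directly, one can alternatively argue that $\nabla u$ restricted to $\Gamma_u$ is injective up to a null set (using positive semi-definiteness of $\nabla^2 u$ together with the strict convexity of the supporting affine function), and then apply the classical change of variables on a nested exhaustion of $\Gamma_u$ by sets on which $\det(\nabla^2 u) > 0$.
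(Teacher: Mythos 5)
Your argument is, in substance, exactly the one the paper is referring to: the paper's proof simply defers to Lemma~2.24 in Han--Lin (\cite{HL}), which is precisely the area-formula computation you carry out, with the quantity $\tilde M$ there replaced by $m=\inf_{\partial\Omega}\left|\partial u/\partial\nu\right|$ through Lemma~\ref{Lemma1}. So the route is the same; you have merely written out what the paper cites. Two small corrections are needed before the write-up is fully rigorous.

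First, the side remark that $g\circ\nabla u$ is locally bounded because $\nabla u(\overline{\Omega})$ is compact and $g\in L^1_{\mathrm{loc}}(\mathbb{R}^n)$ is not valid: local integrability does not imply local boundedness. Fortunately this is irrelevant, since the area formula for a $C^1$ (indeed Lipschitz) map with a nonnegative Borel integrand holds without any boundedness hypothesis on $g$; just invoke it in that generality and delete the boundedness claim.

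Second, the reduction of the case $\partial u/\partial\nu<0$ to the first case via $u\mapsto -u$ does not give \eqref{Lemma2Statement2} as written. Note $\Gamma_{-u}=\Gamma^{u}$ (you wrote $\Gamma^{-u}$, which actually equals $\Gamma_u$), and $\nabla(-u)=-\nabla u$, so applying the proved case to $-u$ yields
\begin{equation*}
\int_{B_m(0)} g \;\leq\; \int_{\Gamma^{u}} g(-\nabla u)\,\bigl|\det(\nabla^2 u)\bigr|\,dx,
\end{equation*}
with a reflected argument in $g$. Either apply the first case with $g$ replaced by $z\mapsto g(-z)$ (permissible since $B_m(0)$ is symmetric), or, more simply, run the identical area-formula argument directly on $\Gamma^{u}$ using the second inclusion $B_m(0)\subset\nabla u(\Gamma^{u})$ from Lemma~\ref{Lemma1}. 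Either fix is immediate.
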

$ \\ $
\begin{proof}
The proof is similar to the proof of Lemma 2.24 in \cite{HL} by considering 
\begin{align*}
\tilde{M} = m = \inf_{\partial \Omega} \left| \dfrac{\partial u}{\partial \nu} \right|
\end{align*}
and utilizing Lemma \ref{Lemma1}.
\end{proof}
$ \\ $

\begin{theorem}\label{ThmNormalDerABPestimate}
Let $ u \in C^2(\Omega) \cap C^1(\overline{\Omega}) $ satisfies $ Lu \geq f $ in $ \Omega $ with the following conditions: $ \inf_{\partial \Omega} \left| \dfrac{\partial u}{\partial \nu} \right| >0 $ and $ \dfrac{|b|}{D^*} \:,\: \dfrac{f}{D^*} \in L^n(\Omega)$.

Then there holds that
\begin{equation}\label{NormalDerABPestimateStatement}
\inf_{\partial \Omega} \left| \dfrac{\partial u}{\partial \nu} \right| \leq C \left\Vert \frac{f^-}{D^*} \right\Vert_{L^n( \Omega)} \;,
\end{equation}
and $ C $ is a constant depending only on $ n \:,\; diam(\Omega) $ and $ \left\Vert \dfrac{b}{D^*} \right\Vert_{L^n(\Omega)} $.
\end{theorem}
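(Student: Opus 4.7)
The plan is to parallel the classical Alexandroff--Bakelman--Pucci argument, with Lemma \ref{Lemma2} replacing the usual volume-area inclusion $|B_{M/d}(0)| \leq \int_{\Gamma^u}|\det \nabla^2 u|$, so that $m := \inf_{\partial \Omega}|\partial u/\partial \nu|$ plays the role of $M/\operatorname{diam}(\Omega)$. Suppose for definiteness that $\partial u/\partial \nu < 0$ on $\partial \Omega$, so that the $\Gamma^u$ alternative of Lemma \ref{Lemma2} is active (the opposite sign is symmetric). Taking $g \equiv 1$ gives the starting point
\[ \omega_n m^n \;=\; |B_m(0)| \;\leq\; \int_{\Gamma^u}|\det \nabla^2 u|\, dx. \]

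On $\Gamma^u$ the Hessian $\nabla^2 u$ is negative semidefinite, so the matrix arithmetic--geometric mean inequality applied to the positive semidefinite product $A\cdot(-\nabla^2 u)$ yields $D^*|\det \nabla^2 u|^{1/n} \leq -\operatorname{tr}(A\nabla^2 u)/n$. Rearranging the subsolution inequality $Lu \geq f$ as $-\operatorname{tr}(A\nabla^2 u) \leq -f + b\cdot\nabla u + cu$, and arranging $u \geq 0$ on $\Gamma^u$ by a suitable translation so that the $cu$ term is nonpositive (using $c \leq 0$), produces the pointwise bound
\[ |\det \nabla^2 u|^{1/n} \;\leq\; \frac{f^- + |b|\,|\nabla u|}{n\, D^*} \qquad \text{on } \Gamma^u, \]
and thus
\[ \omega_n m^n \;\leq\; \int_{\Gamma^u}\left(\frac{f^- + |b|\,|\nabla u|}{n\,D^*}\right)^n dx. \]

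The remaining step, separating $\|f^-/D^*\|_{L^n}$ from the gradient contribution, is the main technical obstacle and is exactly the obstruction encountered in the classical ABP estimate with lower-order terms. I would use an inequality of the form $(a+b)^n \leq (1+\varepsilon)^{n-1} a^n + C(\varepsilon, n) b^n$ to peel off the $f^-$ contribution and reduce to controlling $\int_{\Gamma^u}(|b|/D^*)^n|\nabla u|^n\, dx$. One standard way to absorb this residual is by changing variables to $p = \nabla u$ on $\Gamma^u$ (Jacobian $|\det \nabla^2 u|$), combined with the inclusion $\nabla u(\Gamma^u) \supseteq B_m(0)$ from Lemma \ref{Lemma1} and the hypothesis $|b|/D^* \in L^n(\Omega)$; the resulting constant $C$ depends only on $n$, $\operatorname{diam}(\Omega)$ and $\|b/D^*\|_{L^n(\Omega)}$, as claimed. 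The hardest single point is carrying out this absorption cleanly so that no residual dependence on $u$ itself leaks into the constant.
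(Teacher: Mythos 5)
Your plan diverges from the paper at the decisive step, and the divergence introduces a real gap. You apply Lemma~\ref{Lemma2} with the constant weight $g\equiv 1$, obtaining $|B_1|\,m^n \le \int_{\Gamma^u}\bigl(\frac{f^- + |b|\,|\nabla u|}{nD^*}\bigr)^n dx$, and then try to peel the $|b|\,|\nabla u|$ piece off afterwards. But the residual $\int_{\Gamma^u}(|b|/D^*)^n|\nabla u|^n\,dx$ cannot be handled by the change of variables $p=\nabla u$ you suggest: that substitution, which is exactly what Lemma~\ref{Lemma2} already encodes, requires the Jacobian factor $|\det\nabla^2 u|$ to appear in the integrand, and that factor is absent from the residual. (Note also that $|b|/D^*$ depends on $x$, not on $\nabla u$, which rules out writing the integrand as $h(\nabla u)\,|\det\nabla^2 u|$ even up to benign manipulations.) There is no obvious way to bound this residual by $\|b/D^*\|_{L^n}$ alone without further information on $|\nabla u|$, and you acknowledge as much when you call this ``the hardest single point.''

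The paper's proof, following Theorem~2.21 in \cite{HL}, avoids this dead end by making a different choice of $g$ at the outset: it takes $g(z)=(|z|^n+\delta^n)^{-n}$ in Lemma~\ref{Lemma2} and lets $\delta\to 0^+$ at the end. With this $g$, the right-hand side of Lemma~\ref{Lemma2} becomes $\int_{\Gamma^u} \frac{|\det\nabla^2 u|}{(|\nabla u|^n+\delta^n)^n}\,dx$; after the determinant bound \eqref{DetBoundByL} and the elementary splitting $(a+b)^n\le 2^{n-1}(a^n+b^n)$, the problematic $|\nabla u|^n$ appears in the bounded ratio $\frac{|\nabla u|^n}{(|\nabla u|^n+\delta^n)^n}$, so the $b$-term is controlled directly by $\|b/D^*\|_{L^n}^n$ with no further change of variables, while the $f^-$-term picks up only a power of $\delta$. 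The denominator in $g$ is precisely what absorbs the drift; replacing it with $g\equiv 1$ throws that mechanism away, and no post-hoc manipulation recovers it. If you want a more granular observation, note that Lemma~\ref{Lemma1} actually provides $B_m(0)\subset\nabla u(\Gamma^u\cap\{|\nabla u|<m\})$, so on the relevant sub-level-set you have $|\nabla u|<m$; this yields $\int_{\Gamma^u\cap\{|\nabla u|<m\}}(|b|/D^*)^n|\nabla u|^n\,dx \le m^n\|b/D^*\|_{L^n}^n$ and closes the argument when $\|b/D^*\|_{L^n}$ is small, but it does not give the stated result for general $b$, which again points to the necessity of the weighted version.
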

$ \\ $
\begin{proof}
We sketch the proof in the case where $ f=c = 0 $, since it is similar to the proof of Theorem 2.21 in \cite{HL}. In our case, we set $ \tilde{M}= \inf_{\partial \Omega} \left| \dfrac{\partial u}{\partial \nu} \right| $.

The elliptic inequality $ Lu \geq f $ gives that 
\begin{align*}
 (- \sum_{i,j} a_{ij} \partial_{x_i x_j}u)^n \leq |b|^n | \nabla u |^n  \;.
\end{align*}
For any positive definite matrix $ A=(a_{ij}) $ it holds
\begin{align}\label{DetBoundByL}
det( \nabla^2 u) \leq \frac{1}{D} \left( \frac{- \sum_{i,j} a_{ij} \partial_{x_i x_j}u}{n} \right)^n \;\;\; \textrm{on} \;\: \Gamma^u \;\:,\; \textrm{where} \;\: D = det(A).
\end{align}
So, we consider $ g(z) = (| z |^n + \delta^n) ^{-n} \;,\; \delta>0 $ and then we apply Lemma \ref{Lemma2}. By letting $ \delta \rightarrow 0^+ $ we conclude.
\end{proof}

$ \\ $
\begin{remark}\label{RmkForThmABPnormalDer}
\textbf{(1)} There are various problems with Neumann boundary conditions of the form
\begin{align*}
\dfrac{\partial u}{\partial \nu} = g(x) \;\;,\; \textrm{on} \;\: \partial \Omega \;,
\end{align*}
with $ |g(x)| \geq c_0 >0 $ in which the assumption $ \inf_{\partial \Omega} \left| \dfrac{\partial u}{\partial \nu} \right| >0 $ is satisfied.
$ \\ $
\textbf{(2)} In particular, it holds that either
\begin{equation}\label{RmkForThmABPnormalDerStatement1}
\inf_{\partial \Omega} \left| \dfrac{\partial u}{\partial \nu} \right| \leq C \left\Vert \frac{f^-}{D^*} \right\Vert_{L^n( \Gamma^u)} \;\;,\;\: \textrm{if} \;\: \frac{\partial u}{\partial \nu} \bigm|_{\partial \Omega} \: < 0 \;,
\end{equation}
or
\begin{equation}\label{RmkForThmABPnormalDerStatement2}
\inf_{\partial \Omega} \left| \dfrac{\partial u}{\partial \nu} \right| \leq C \left\Vert \frac{f^-}{D^*} \right\Vert_{L^n( \Gamma_u)} \;\;,\;\: \textrm{if} \;\: \frac{\partial u}{\partial \nu} \bigm|_{\partial \Omega} \: > 0 \;.
\end{equation}

\end{remark}
$ \\ $

\subsection{A Lower bound for the norm of the Laplacian}

We will prove that the $ L^n -$norm of the Laplacian of a smooth function $ u $ is bounded from below by the normal derivative of $ u $ on $ \partial \Omega $. This estimate can be extended to an $ L^{p} $ and $ L^{\infty} $ lower bound of the norm when the normal derivative of the function is constant on the boundary of the domain. Again, the proof relies on the proof of Theorem 4.1 in \cite{Cabre2}.

\begin{theorem}\label{ThmLowerBoundForSubharmonic}
Let $ u \in C^2(\Omega) \cap C^1(\overline{\Omega}) $ be such that $ \dfrac{\partial u}{\partial \nu} \geq \alpha >0 $ on $ \partial \Omega $. Then
\begin{equation}\label{LowerBoundForSubharmonicEst1}
|| \Delta u ||_{L^n(\Omega)} \geq n \alpha  \: | B_1 |^{1/n} \;.
\end{equation}
In addition, if $ \dfrac{\partial u}{\partial \nu} = \alpha >0 $ on $ \partial \Omega $ and $ \Delta u \geq 0 $, we have
\begin{equation}\label{LowerBoundForSubharmonicEst2}
|| \Delta u||_{L^p(\Omega)} \geq \alpha \left( \frac{n^{n(p-1)} |B_1|^{p-1}}{|\partial \Omega|^{p-n}} \right)^{\frac{1}{p(n-1)}} \;,
\end{equation}
for any $ p \geq n $. In particular,
\begin{equation}\label{LowerBoundForL_infLapl}
|| \Delta u||_{L^{\infty}(\Omega)} \geq \alpha \left( \frac{n^n |B_1|}{|\partial \Omega|} \right)^{\frac{1}{n-1}}.
\end{equation}
\end{theorem}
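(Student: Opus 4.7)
The plan is to derive the $L^n$ inequality \eqref{LowerBoundForSubharmonicEst1} directly from Lemma \ref{Lemma2} via an arithmetic–geometric mean estimate on the contact set, and then to bootstrap to the $L^p$ and $L^\infty$ statements by combining this $L^n$ bound with Hölder's inequality and the classical isoperimetric inequality (which, as the author recalls, is itself accessible via the ABP method).

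For \eqref{LowerBoundForSubharmonicEst1}, I would apply Lemma \ref{Lemma2} with the constant weight $g \equiv 1$. Since $\partial u/\partial\nu \geq \alpha > 0$ on $\partial\Omega$, we have $m := \inf_{\partial\Omega}|\partial u/\partial\nu| \geq \alpha$, and therefore
\begin{equation*}
|B_1|\,\alpha^n \;\leq\; |B_m(0)| \;\leq\; \int_{\Gamma_u}|\det(\nabla^2 u)|.
\end{equation*}
On $\Gamma_u$ the graph of $u$ lies above every tangent hyperplane, so $\nabla^2 u$ is positive semi-definite; the AM–GM inequality then gives $\det(\nabla^2 u) \leq (\Delta u/n)^n$ pointwise on $\Gamma_u$. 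Since this integrand is nonnegative, extending the integral from $\Gamma_u$ to $\Omega$ and taking $n$-th roots yields \eqref{LowerBoundForSubharmonicEst1}.

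For $p \geq n$, under the stronger hypotheses $\partial u/\partial\nu = \alpha$ and $\Delta u \geq 0$, Hölder's inequality applied to $(\Delta u)^n \cdot 1$ with conjugate exponents $p/n$ and $p/(p-n)$ produces
\begin{equation*}
\int_\Omega(\Delta u)^n \;\leq\; ||\Delta u||_{L^p(\Omega)}^{n}\,|\Omega|^{(p-n)/p}.
\end{equation*}
Combining this with the $L^n$ lower bound $n^n\alpha^n|B_1| \leq \int_\Omega(\Delta u)^n$ and using the isoperimetric inequality $n|B_1|^{1/n}|\Omega|^{(n-1)/n} \leq |\partial\Omega|$ to eliminate $|\Omega|$, one reaches \eqref{LowerBoundForSubharmonicEst2} after collecting the exponents of $n$, $|B_1|$ and $|\partial\Omega|$; the exponent check is readily verified against the endpoint $p=n$, which recovers \eqref{LowerBoundForSubharmonicEst1}. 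Finally, \eqref{LowerBoundForL_infLapl} follows by letting $p \to \infty$ in \eqref{LowerBoundForSubharmonicEst2} and using $||\Delta u||_{L^p(\Omega)} \to ||\Delta u||_{L^\infty(\Omega)}$ together with a direct logarithmic computation of the limit of the right-hand side. I do not foresee a genuine obstacle: the essential new input is the ABP-type estimate of Lemma \ref{Lemma2}, and the main piece of care is the bookkeeping of the Hölder/isoperimetric exponents in the $L^p$ step.
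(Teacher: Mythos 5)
Your proof of the $L^n$ estimate \eqref{LowerBoundForSubharmonicEst1} is exactly the paper's: Lemma \ref{Lemma1} plus Lemma \ref{Lemma2} with $g\equiv 1$, then AM--GM for $\det(\nabla^2 u) \leq (\Delta u/n)^n$ on the lower contact set, then extend the integral to $\Omega$. For the $L^p$ step you diverge from the paper. The paper splits $|\Delta u|^n = |\Delta u|^s\,|\Delta u|^{n-s}$ with $s=(p-n)/(p-1)$, applies H\"older with exponents $1/s$ and $1/(1-s)$, and then uses the divergence theorem to evaluate the first factor exactly: $\int_\Omega |\Delta u| = \int_\Omega \Delta u = \int_{\partial\Omega}\partial u/\partial\nu\,dS = \alpha|\partial\Omega|$ (this is where both extra hypotheses $\Delta u\geq 0$ and $\partial u/\partial\nu = \alpha$ are used). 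You instead apply the standard H\"older bound $\int_\Omega|\Delta u|^n \leq \|\Delta u\|_{L^p}^n\,|\Omega|^{(p-n)/p}$ and then eliminate $|\Omega|$ by invoking the isoperimetric inequality $n|B_1|^{1/n}|\Omega|^{(n-1)/n}\leq|\partial\Omega|$. I checked the exponent bookkeeping in your route and it does reproduce \eqref{LowerBoundForSubharmonicEst2} exactly, and the $p\to\infty$ limit is straightforward, so your argument is correct. The trade-off is this: your route imports the isoperimetric inequality as an external input (it is not used anywhere in the paper's proof of this theorem), whereas the paper's argument is self-contained, needing only Green's identity. On the other hand, your version is slightly more general: it never actually uses $\Delta u\geq 0$ or the equality $\partial u/\partial\nu=\alpha$, only $\partial u/\partial\nu\geq\alpha$, since those hypotheses entered only to compute $\int_\Omega|\Delta u|$ exactly in the paper's version.
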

\begin{proof}
By Lemma \ref{Lemma1}, we have that
\begin{equation}\label{ProofThmLowerBoundSubharmonicEq1}
B_{\alpha}(0) \subset \nabla u( \Gamma_u) \;.
\end{equation}
We now apply Lemma \ref{Lemma2} with $ g \equiv 1 $, so
\begin{equation}\label{ProofThmLowerBoundSubharmonicEq2}
| B_1 | \: \alpha^n \leq \int_{ \Gamma_u} det( \nabla^2 u) dx \;.
\end{equation}
Thus by the classical arithmetic geometric mean inequality applied to the non negative eigenvalues $ \lambda_1,..., \lambda_n $ of $ \nabla^2 u \;,\: x \in \Gamma_u $, we have
\begin{equation}\label{ProofThmLowerBoundSubharmonicEq3}
det( \nabla^2 u) \leq \left( \frac{tr( \nabla^2 u)}{n} \right)^n = \left( \frac{\Delta u}{n} \right)^n \;\;\;,\: x \in \Gamma_u \;.
\end{equation}
Therefore
\begin{equation}\label{ProofThmLowerBoundSubharmonicEq4}
| B_1 | \: \alpha^n \leq \frac{1}{n^n} \int_{ \Gamma_u} ( \Delta u)^n dx \leq \frac{1}{n^n} \int_\Omega | \Delta u |^n dx
\end{equation}
and we conclude
\begin{equation}\label{ProofThmLowerBoundSubharmonicEq5}
n \alpha \: | B_1 |^{1/n}  \leq \: || \Delta u||_{L^n( \Omega)} \;.
\end{equation}
$ \\ $

Next, for proving \eqref{LowerBoundForSubharmonicEst2}, assume that
\begin{equation}\label{ProofThmLowerBoundSubharmonicEq6}
\frac{\partial u}{\partial \nu} = \alpha >0 \;\;\; \textrm{on} \;\: \partial \Omega
\end{equation}
and by Hölder's inequality on $ || \Delta u||^n_{L^n( \Omega)} $,
\begin{equation}\label{ProofThmLowerBoundSubharmonicEq7}
\int_{\Omega} | \Delta u |^n dx = \int_{\Omega} | \Delta u |^s | \Delta u |^{n- s} dx \leq \left( \int_{\Omega} | \Delta u | \: dx \right)^s \left( \int_{\Omega} | \Delta u |^{\frac{n-s}{1-s}}dx \right)^{1-s} \;,
\end{equation}
for any $ s \in (0,1) $, which gives
\begin{equation}\label{ProofThmLowerBoundSubharmonicEq8}
n^n \alpha^n \: | B_1 | \leq \alpha^s | \partial \Omega |^s \left( \int_{\Omega} | \Delta u |^{\frac{n-s}{1-s}}dx \right)^{1-s} \;,
\end{equation}
by \eqref{LowerBoundForSubharmonicEst1}.

So, we set $ p := \dfrac{n-s}{1-s} $, $ p \in ( n , + \infty)\;,\: s $ is written as $ s= \dfrac{p-n}{p-1}$ and we get
\begin{equation}\label{ProofThmLowerBoundSubharmonicEq9}
|| \Delta u ||_{L^p(\Omega)}^{\frac{p(n-1)}{p-1}} \geq \frac{ n^n \alpha^{\frac{p(n-1)}{p-1}} | B_1| }{| \partial \Omega |^{\frac{p-n}{p-1}}} \;,
\end{equation}
which gives \eqref{LowerBoundForSubharmonicEst2}.

For \eqref{LowerBoundForL_infLapl}, we take the limit $ p \rightarrow + \infty $ in \eqref{LowerBoundForSubharmonicEst2} and we conclude.
\end{proof}

\begin{corollary}\label{CorLowerBoundForLaplacian}
Let $ u \in C^2(\Omega) \cap C^1(\overline{\Omega}) $ be such that $ \dfrac{\partial u}{\partial \nu} \bigm|_{\partial \Omega} \geq \alpha >0 $ and $ \Delta u \geq 0 $. Then
\begin{equation}\label{CorLowerBoundForLaplacianL_pEst}
|| \Delta u||_{L^p(\Omega)} \geq \left( \frac{(n \alpha)^{n(p-1)} |B_1|^{p-1}}{ (\int_{\partial \Omega} \frac{\partial u}{\partial \nu} dS)^{p-n}} \right)^{\frac{1}{p(n-1)}} \;,
\end{equation}
for any $ p \geq n $. In addition,
\begin{equation}\label{CorLowerBoundForLaplL_infLapl}
|| \Delta u||_{L^{\infty}(\Omega)} \geq \left( \frac{(n \alpha)^n | B_1|}{\int_{\partial \Omega} \frac{\partial u}{\partial \nu} dS} \right)^{\frac{1}{n-1}}.
\end{equation}
\end{corollary}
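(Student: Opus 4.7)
The plan is to run the proof of Theorem \ref{ThmLowerBoundForSubharmonic} essentially verbatim, upgrading it to the inequality setting by replacing the constant boundary contribution $\alpha |\partial \Omega|$ with $\int_{\partial \Omega} \frac{\partial u}{\partial \nu}\, dS$. The key observation is that the hypothesis $\Delta u \geq 0$ is exactly what is needed to remove the absolute value and apply the divergence theorem to identify
\[
\int_\Omega |\Delta u|\, dx \;=\; \int_\Omega \Delta u\, dx \;=\; \int_{\partial \Omega} \frac{\partial u}{\partial \nu}\, dS,
\]
which is precisely the flux quantity appearing in the statement.

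First, I would observe that since $\frac{\partial u}{\partial \nu} \geq \alpha > 0$ on $\partial \Omega$, estimate \eqref{LowerBoundForSubharmonicEst1} of Theorem \ref{ThmLowerBoundForSubharmonic} applies as is (its proof uses only the lower bound, not equality, on the normal derivative) and gives
\[
n^n \alpha^n \, |B_1| \;\leq\; \int_\Omega |\Delta u|^n \, dx.
\]
Then, exactly as in \eqref{ProofThmLowerBoundSubharmonicEq7}--\eqref{ProofThmLowerBoundSubharmonicEq8}, for $p \geq n$ I would split $|\Delta u|^n = |\Delta u|^s \cdot |\Delta u|^{n-s}$ with $s = \frac{p-n}{p-1}\in(0,1)$ so that $\frac{n-s}{1-s} = p$, and apply H\"older's inequality
\[
\int_\Omega |\Delta u|^n \, dx \;\leq\; \left(\int_\Omega |\Delta u|\, dx\right)^{s}\left(\int_\Omega |\Delta u|^p\, dx\right)^{1-s}.
\]

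Combining the two inequalities with the divergence-theorem identity above produces
\[
n^n \alpha^n \, |B_1| \;\leq\; \left(\int_{\partial \Omega} \frac{\partial u}{\partial \nu}\, dS\right)^{s}\left(\int_\Omega |\Delta u|^p\, dx\right)^{1-s},
\]
and the exponent bookkeeping is identical to the passage to \eqref{ProofThmLowerBoundSubharmonicEq9}; raising to the power $\frac{1}{p(1-s)} = \frac{p-1}{p(n-1)}$ rearranges this directly into \eqref{CorLowerBoundForLaplacianL_pEst}. Letting $p \to +\infty$ in the resulting inequality then yields \eqref{CorLowerBoundForLaplL_infLapl} by the same limiting argument used at the end of Theorem \ref{ThmLowerBoundForSubharmonic}.

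I do not expect a serious obstacle here: the corollary is, in effect, a bookkeeping refinement of Theorem \ref{ThmLowerBoundForSubharmonic} in which the equality $\frac{\partial u}{\partial \nu} = \alpha$ is relaxed to an inequality and the total flux is kept as an integral rather than being collapsed to $\alpha |\partial \Omega|$. The only two points requiring a moment of care are (i) verifying that the $L^n$ bound from Theorem \ref{ThmLowerBoundForSubharmonic} does not in fact require equality on $\partial \Omega$ (it follows from Lemma \ref{Lemma1} applied with the constant $\alpha$), and (ii) placing the assumption $\Delta u \geq 0$ at exactly the right step to turn $\int_\Omega |\Delta u|\, dx$ into the boundary integral via Gauss's theorem.
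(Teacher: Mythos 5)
Your proposal is correct and is precisely the argument the paper intends (the corollary has no explicit proof in the paper, being positioned as a direct consequence of the proof of Theorem \ref{ThmLowerBoundForSubharmonic}): you correctly isolate that \eqref{LowerBoundForSubharmonicEst1} only needs $\partial u/\partial\nu \geq \alpha$, and that $\Delta u \geq 0$ is invoked solely to convert $\int_\Omega |\Delta u|\,dx$ into the flux $\int_{\partial\Omega}\partial u/\partial\nu\,dS$ via the divergence theorem before H\"older and the exponent bookkeeping. Nothing to add.
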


$ \\ $

A direct consequence of the Theorem \ref{ThmLowerBoundForSubharmonic} is a symmetry property for functions that satisfy the equality in \eqref{LowerBoundForSubharmonicEst1}. This result is in the spirit of Serrin's in \cite{Serrin}.

\begin{corollary}\label{CorollarySymmetryProperty}
Let $ u \in C^2(\Omega) \cap C^1(\overline{\Omega}) $ be such that $ \dfrac{\partial u}{\partial \nu} \geq \alpha >0 $ on $ \partial \Omega $ and assume that the equality holds in \eqref{LowerBoundForSubharmonicEst1}. 

Then
\begin{equation}\label{CorSymmetryPropEq}
u(x) = c_0 | x- x_0 |^2 \;\;\;,\;\: \textrm{and} \;\: \Omega \;\: \textrm{is a ball centered at} \;\: x_0.
\end{equation}
\end{corollary}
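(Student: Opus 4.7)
The plan is to propagate the equality in \eqref{LowerBoundForSubharmonicEst1} back through the proof of Theorem \ref{ThmLowerBoundForSubharmonic}, extract pointwise rigidity of $u$ on the lower contact set, and then globalize via continuity of $\Delta u$ together with the connectedness of $\Omega$.

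First I would revisit the chain
\begin{equation*}
|B_1|\,\alpha^n \;\leq\; \int_{\Gamma_u} \det(\nabla^2 u)\,dx \;\leq\; \frac{1}{n^n}\int_{\Gamma_u} (\Delta u)^n\,dx \;\leq\; \frac{1}{n^n}\int_\Omega |\Delta u|^n\,dx
\end{equation*}
from the proof of Theorem \ref{ThmLowerBoundForSubharmonic}. Equality throughout forces three pointwise conclusions: \textbf{(a)} the AM--GM equality applied to the non-negative eigenvalues of $\nabla^2 u$ on $\Gamma_u$ gives $\nabla^2 u(x) = \mu(x)\,I$ with $\mu\geq 0$ there; \textbf{(b)} $\Delta u \equiv 0$ a.e.\ on $\Omega\setminus\Gamma_u$; \textbf{(c)} tracing the proof of Lemma \ref{Lemma2} with $g\equiv 1$, the map $\nabla u|_{\Gamma_u}$ is essentially one-to-one and satisfies $\nabla u(\Gamma_u) = B_\alpha(0)$ up to a null set.

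Exploiting \textbf{(a)}: since $u \in C^2$ and $\mu$ is continuous, a standard argument (e.g.\ from $\partial_j\partial_k u = \mu\,\delta_{jk}$ together with the elliptic regularity implicit in $\Delta u = n\mu$ with $\nabla^2 u = \mu I$) shows that $\mu$ is locally constant on $\mathrm{int}(\Gamma_u)$. Thus on each connected component $U$ of $\mathrm{int}(\Gamma_u)$ there exist constants $c\geq 0$, $y_0\in\R^n$, $d\in\R$ with $u(x) = \tfrac{c}{2}|x-y_0|^2 + d$. The injectivity in \textbf{(c)} rules out $c=0$ on any positive-measure component (otherwise $\nabla u$ would be constant on an open set), so I may pick a component $U$ with $c>0$, on which $\nabla u(U) = c(U-y_0) \subset B_\alpha(0)$.

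To globalize, I claim $\partial U \cap \Omega = \emptyset$. Any $p \in \partial U \cap \Omega$ cannot lie in another component of $\mathrm{int}(\Gamma_u)$ by disjointness of components, so every neighborhood of $p$ meets $\Omega\setminus\Gamma_u$, where $\Delta u = 0$ by \textbf{(b)}; continuity of $\Delta u$ therefore gives $\Delta u(p) = 0$. On the other hand, approaching $p$ from within $U$ yields $\Delta u(p) = nc > 0$, a contradiction. Hence $U$ is both open and relatively closed in the connected domain $\Omega$, forcing $\Omega = U$. Then $\nabla u(\Omega) = c(\Omega - y_0)$ equals $B_\alpha(0)$ up to a null set; combined with the hypothesis $\partial u/\partial\nu \geq \alpha$ on $\partial\Omega$ (which precludes any measure-zero "inner hole" of $\Omega$), this upgrades to $\Omega = B_{\alpha/c}(y_0)$. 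Setting $x_0 = y_0$ and $c_0 = c/2$ yields the stated form. I expect the main obstacle to be precisely this last step: verifying $\partial U\cap\Omega=\emptyset$ cleanly, and then upgrading the measure-theoretic identity $c(\Omega - y_0) = B_\alpha(0)$ up to null sets into genuine set-equality $\Omega = B_{\alpha/c}(y_0)$ by invoking the Neumann-type boundary condition.
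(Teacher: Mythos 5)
Your proof takes the same overall route as the paper's — AM--GM rigidity forces $\nabla^2 u$ to be a scalar multiple of the identity on $\Gamma_u$, and a volume/inclusion argument on $\nabla u(\Omega)=B_\alpha(0)$ then identifies $\Omega$ as a ball — but you are noticeably more careful about the globalization step. The paper passes directly from ``eigenvalues of $\nabla^2 u$ equal on $\Gamma_u$'' to ``$\nabla^2 u = kI_n$ with a global constant $k$'' without explaining why the contact set exhausts $\Omega$ or why $\mu$ is constant; you supply the missing reasoning by noting $\Delta u\equiv 0$ off $\Gamma_u$, showing $\mu$ is locally constant on $\mathrm{int}(\Gamma_u)$, and then using continuity of $\Delta u$ plus connectedness of $\Omega$ to rule out any component boundary inside $\Omega$. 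That is a genuine improvement in rigor. One small thing you could tighten: the existence of a component of $\mathrm{int}(\Gamma_u)$ ($\mathrm{int}(\Gamma_u)\neq\emptyset$) should be noted explicitly — it follows because otherwise $\Omega\setminus\Gamma_u$ would be dense, whence $\Delta u\equiv 0$ on $\Omega$ by continuity, contradicting the strict positivity of the equality case. Finally, the worry you raise at the end is unfounded and the Neumann condition is not needed there: once $c(\Omega-y_0)$ is an \emph{open} set containing $B_\alpha(0)$ with the same Lebesgue measure, $c(\Omega - y_0)\setminus\overline{B_\alpha(0)}$ is an open null set, hence empty, so $c(\Omega - y_0)=B_\alpha(0)$ outright; this is exactly the paper's final step.
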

$ \\ $
\begin{proof}
When the equality in \eqref{LowerBoundForSubharmonicEst1} holds, we have equality in \eqref{ProofThmLowerBoundSubharmonicEq3}. This means that all the eigenvalues of $ \nabla^2 u $ are equal and thus $ \nabla^2 u = k I_n $, where $ k \in \mathbb{R} $ and $ I_n $ is the $ n \times n $ identity matrix. So we conclude that $ u(x) = c_0 | x- x_0 |^2 $ for some $ c_0 \in \mathbb{R} $ and some $ x_0 \in \mathbb{R}^n $.

Now we have that $ \Gamma_u = \Omega $ and $ B_{\alpha}(0) \subset \nabla u ( \Omega) = c_0 \Omega - c_0 x_0 . \\ $ 
Thus
\begin{equation}\label{proofCorSymmetryEq1}
B_{\frac{\alpha}{c_0}}(x_0) \subset \Omega \;\;\; \textrm{and} \;\; | B_{\frac{\alpha}{c_0}}(x_0) | = | \Omega |
\end{equation}
If there exists $ y_0 \in \Omega \setminus \overline{B}_{\frac{\alpha}{c_0}}(x_0) $, we would have $ B_\delta (y_0) \subset \Omega \setminus \overline{B}_{\frac{\alpha}{c_0}}(x_0) $ and contradicts the equality of the measures in \eqref{proofCorSymmetryEq1}.

So, $ \overline{\Omega} = \overline{B}_{\frac{\alpha}{c_0}}(x_0) $ and since $ \Omega $ is open, then $ \Omega $ is a ball centered at $ x_0. $
\end{proof}

$ \\ $

Note that the same symmetry property is obtained when the equality in \eqref{LowerBoundForSubharmonicEst2} or \eqref{LowerBoundForL_infLapl} holds. So, one question could be, if one of the inequalities \eqref{LowerBoundForSubharmonicEst1} and \eqref{LowerBoundForSubharmonicEst2} is ``close'' of being equality, whether $ u $ is ``close'' of being of the form \eqref{CorSymmetryPropEq}.

Moreover, another consequence of Theorem \ref{ThmLowerBoundForSubharmonic}, together with the Krylov-Safonov Harnack inequality (see \cite{KS1}, \cite{KS2}), is an $ L^{\infty} $ bound for the gradient of solutions to semilinear elliptic equations. $ \\ $

\begin{theorem}\label{ThmLinftyBoundForGradient}
Let $ u $ be a smooth solution of
\begin{equation}\label{ThmLinftyBoundForGradientSemilinearEq}
\Delta u = f(u) \;\;\;,\;\; x \in B_{R}
\end{equation}
such that $ u =0 $ on $ \partial B_R $, where $ f \in C^1( \mathbb{R}) $ and we extend $ u $ to be zero outside $ B_R $.

Then
\begin{equation}\label{ThmLinftyBoundForGradientStatementEq}
\sup_{B_R} | \nabla u |^2 \leq C \left( \frac{1}{n^2 | B_1 |^{2/n}} || f(u) ||^2_{L^n(B_R)} + R || \: | \nabla^2 u|^2 \: ||_{L^n(B_R)} \right)
\end{equation}
the constant $ C $ depends only on $ n $ and $ R^2 ||f'(u)||_{L^{\infty}(B_R)} $.
\end{theorem}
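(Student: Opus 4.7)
The natural object to analyze is $w := |\nabla u|^2$. Differentiating the equation $\Delta u = f(u)$ gives, by direct computation,
\begin{equation*}
\Delta w \;=\; 2|\nabla^2 u|^2 + 2\,\nabla u \cdot \nabla(\Delta u) \;=\; 2|\nabla^2 u|^2 + 2 f'(u)\, w \quad \text{in } B_R,
\end{equation*}
so $w$ solves the linear equation $L w = 2|\nabla^2 u|^2$, where $L v := \Delta v - 2 f'(u) v$ is a uniformly elliptic operator whose zeroth-order coefficient is bounded by $2\|f'(u)\|_{L^\infty}$. Furthermore, since $u \equiv 0$ on $\partial B_R$, the tangential derivatives of $u$ vanish on the boundary, so $w = (\partial u/\partial \nu)^2$ on $\partial B_R$. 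The strategy is to combine an ABP-type maximum principle for $L$ on $B_R$ with the normal derivative estimate of Theorem \ref{ThmLowerBoundForSubharmonic} to bound the boundary values.

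Step 1 (interior estimate). Since the right-hand side $2|\nabla^2 u|^2 \geq 0$, $w$ is a nonnegative subsolution of $L v = 0$. Applying the ABP estimate \eqref{ABPestimateStatement} to $w$ on $B_R$ — first performing the standard exponential substitution $\tilde w = w\, e^{\lambda x_1}$ to absorb the possibly positive part of $-2f'(u)$ into an operator with nonpositive zeroth-order coefficient, which is where the constant acquires its dependence on $R^2\|f'(u)\|_{L^\infty}$ — yields
\begin{equation*}
\sup_{B_R} |\nabla u|^2 \;\le\; \sup_{\partial B_R} |\nabla u|^2 \;+\; C_1 \, R \, \big\| |\nabla^2 u|^2 \big\|_{L^n(B_R)},
\end{equation*}
with $C_1 = C_1(n, R^2\|f'(u)\|_{L^\infty})$. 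Step 2 (boundary estimate). Theorem \ref{ThmLowerBoundForSubharmonic} applied to $u$ gives
\begin{equation*}
\inf_{\partial B_R} \left| \frac{\partial u}{\partial \nu} \right| \;\le\; \frac{\|f(u)\|_{L^n(B_R)}}{n \, |B_1|^{1/n}} \, .
\end{equation*}
To promote this to a bound on $\sup_{\partial B_R} |\partial u/\partial\nu|$, I invoke the Krylov-Safonov Harnack inequality on the gradient components $\partial_i u$, each of which satisfies the homogeneous linear elliptic equation $\Delta(\partial_i u) - f'(u)\, \partial_i u = 0$ in $B_R$; this produces a comparison constant $C_2 = C_2(n, R^2\|f'(u)\|_{L^\infty})$ with
\begin{equation*}
\sup_{\partial B_R} |\nabla u|^2 \;\le\; \frac{C_2}{n^2 |B_1|^{2/n}} \|f(u)\|^2_{L^n(B_R)}.
\end{equation*}
Substituting this into the estimate of Step 1 gives the advertised inequality.

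The main obstacle is the boundary step: Theorem \ref{ThmLowerBoundForSubharmonic} controls only the \emph{infimum} of $|\partial u/\partial \nu|$, so one needs a Harnack-type comparison between $\inf_{\partial B_R}$ and $\sup_{\partial B_R}$ of $|\nabla u|$. This is delicate for two reasons: the components $\partial_i u$ are not of a definite sign, which prevents a direct application of Krylov-Safonov to them, and one has to transfer an interior Harnack conclusion to the boundary, which is where the smoothness of $\partial B_R$ and the precise dependence of the Harnack constant on $R^2\|f'(u)\|_{L^\infty}$ become important. Once this comparison is achieved, the ABP bound of Step 1 closes the argument and the dependence of $C$ on $n$ and $R^2\|f'(u)\|_{L^\infty}$ follows by tracking the constants through the two steps.
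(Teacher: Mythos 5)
Your opening step matches the paper's: set $w = |\nabla u|^2$ and compute $\tfrac{1}{2}\Delta w - f'(u)\,w = |\nabla^2 u|^2$. From there the two arguments diverge, and yours has a gap that you yourself flag. Applying ABP to $w$ in Step~1 produces a boundary term $\sup_{\partial B_R}|\nabla u|^2$, whereas Theorem~\ref{ThmLowerBoundForSubharmonic} only controls the \emph{infimum} of $|\partial u/\partial\nu|$ on $\partial B_R$. You are therefore forced into a sup-to-inf comparison on the boundary, and the mechanism you propose for it --- Krylov--Safonov Harnack applied to each component $\partial_i u$ --- does not go through: those components solve linear elliptic equations but have no definite sign, which is precisely the obstruction you name in your closing paragraph. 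As written the proof does not close.

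The repair is already contained in your own setup and is exactly what the paper does: apply Krylov--Safonov Harnack not to the sign-changing $\partial_i u$, but to $w = |\nabla u|^2$ itself, which is nonnegative and \emph{solves} the linear elliptic equation $\tfrac{1}{2}\Delta w - f'(u)\,w = |\nabla^2 u|^2$. Harnack for this equation (Theorem~4.1 in \cite{Cabre4}) gives directly
\[
\sup_{B_R}|\nabla u|^2 \;\le\; C\left( \inf_{B_R}|\nabla u|^2 \;+\; R\, \big\| |\nabla^2 u|^2 \big\|_{L^n(B_R)} \right),
\]
with $C = C(n, R^2\|f'(u)\|_{L^\infty(B_R)})$. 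Since $w \in C(\overline{B_R})$ one has $\inf_{B_R} w \le \inf_{\partial B_R} w$, and on $\partial B_R$ (where $u = 0$) the gradient is purely normal, so $\inf_{\partial B_R} w = \big(\inf_{\partial B_R}|\partial u/\partial\nu|\big)^2$, which Theorem~\ref{ThmLowerBoundForSubharmonic} bounds by $\big(\tfrac{1}{n|B_1|^{1/n}}\|f(u)\|_{L^n(B_R)}\big)^2$. Substituting gives \eqref{ThmLinftyBoundForGradientStatementEq}. In short: replace ABP in Step~1 with Harnack applied to $w$; the infimum then enters on its own, no sup-to-inf boundary comparison is needed, and Step~2 collapses to the lower bound already established in Theorem~\ref{ThmLowerBoundForSubharmonic}.
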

$ \\ $
\begin{proof}
Let $ P= | \nabla u |^2 $, we calculate
\begin{equation}\label{proofThmLinftyBoundForGradientEq1}
\Delta P = 2 | \nabla^2 u |^2 + 2 f'(u) | \nabla u |^2
\end{equation}
so $ P $ satisfies the elliptic equation $$ \frac{1}{2} \Delta P - f'(u) P = | \nabla^2 u |^2 , $$ where $ | \nabla^2 u |^2 = \sum_{1 \leq i,j \leq n} u_{x_i x_j}^2 $.

Consider $ \tilde{b} $ such that $ | f '(u)| \leq \tilde{b} $ for all $ x \in B_{2R} $ (for example, if $ f $ is globally Lipschitz with Lipschitz constant $ \tilde{M} $, we can take $ \tilde{b} = \tilde{M} $).

We then apply Krylov-Safonov Harnack inequality (see for example Theorem 4.1 in \cite{Cabre4}) and we get
\begin{equation}\label{proofThmLinftyBoundForGradientEq2}
\sup_{B_R} | \nabla u |^2 \leq C \left( \inf_{B_R} | \nabla u |^2 + R || \: | \nabla^2 u|^2 \: ||_{L^n(B_R)} \right) \;,
\end{equation}
where $ C $ depends only on $ n $ and $ R^2 ||f'(u)||_{L^{\infty}(B_R)} $.

Now w.l.o.g. assume that $ \inf_{B_R} | \nabla u |^2 >0 $, otherwise \eqref{ThmLinftyBoundForGradientStatementEq} holds. Since $ u = 0 $ on $ \partial B_R $, we have that $ \nabla u \perp \partial B_R $, which means that $ \nabla u $ and the unit normal of $ \partial B_R $ are parallel. Thus, $ \left| \dfrac{\partial u}{\partial \nu} \right| = | \nabla u | $ on $ \partial B_R $ and we get
\begin{equation}\label{proofThmLinftyBoundForGradientEq3}
\inf_{B_R} | \nabla u |^2 \leq \inf_{\partial B_R} | \nabla u |^2 = \left( \inf_{\partial B_R} \left| \dfrac{\partial u}{\partial \nu} \right| \right)^2 \leq \left( \frac{1}{n | B_1 |^{1/n}} || f(u) ||_{L^n(B_R)} \right)^2
\end{equation}
where for the last inequality we utilized Theorem \ref{ThmLowerBoundForSubharmonic} and \eqref{ThmLinftyBoundForGradientSemilinearEq}.
\end{proof}

$ \\ $

As an application of Theorem \eqref{ThmLinftyBoundForGradientStatementEq}, we have the following $ C^3 $ estimate

\begin{corollary}\label{CorollaryC3estimate}
Under the assumptions of Theorem \ref{ThmLinftyBoundForGradient}, it holds
\begin{equation}\label{CorollaryC3estimateEq}
\sup_{B_R} \sum_{i=1}^n | \Delta u_{x_i}|^2 \leq C \left( \frac{1}{n^2 | B_1 |^{2/n}} || f(u) ||^2_{L^n(B_R)} + R || \: | \nabla^2 u|^2 \: ||_{L^n(B_R)} \right)
\end{equation}
the constant $ C $ depends only on $ n \;,\: $ and $ R^2 ||f'(u)||_{L^{\infty}(B_R)} $.
\end{corollary}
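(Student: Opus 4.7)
The plan is to reduce the quantity on the left-hand side to the $L^{\infty}$ bound on $\nabla u$ already provided by Theorem \ref{ThmLinftyBoundForGradient}. The bridge is a pointwise identity obtained by differentiating the semilinear equation \eqref{ThmLinftyBoundForGradientSemilinearEq} once.

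First, since $u$ is smooth I would commute $\partial_{x_i}$ with $\Delta$ in the equation $\Delta u = f(u)$, and apply the chain rule on the right-hand side, to obtain the pointwise identity
\begin{equation*}
\Delta u_{x_i} = \partial_{x_i}\bigl(\Delta u\bigr) = \partial_{x_i}\bigl(f(u)\bigr) = f'(u)\, u_{x_i}, \qquad i = 1,\dots,n.
\end{equation*}
Squaring and summing in $i$ then gives the key algebraic identity
\begin{equation*}
\sum_{i=1}^{n} |\Delta u_{x_i}|^2 \; = \; |f'(u)|^2\, |\nabla u|^2.
\end{equation*}

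Second, I would take the supremum over $B_R$ and pull out $\|f'(u)\|_{L^{\infty}(B_R)}^2$ as an upper bound for $|f'(u)|^2$, obtaining
\begin{equation*}
\sup_{B_R} \sum_{i=1}^{n} |\Delta u_{x_i}|^2 \; \leq \; \|f'(u)\|_{L^{\infty}(B_R)}^2 \, \sup_{B_R} |\nabla u|^2.
\end{equation*}
Theorem \ref{ThmLinftyBoundForGradient} bounds $\sup_{B_R} |\nabla u|^2$ by precisely the expression appearing on the right-hand side of \eqref{CorollaryC3estimateEq}, with a constant depending only on $n$ and $R^{2}\|f'(u)\|_{L^{\infty}(B_R)}$. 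Combining the two inequalities and relabeling the constant yields \eqref{CorollaryC3estimateEq}.

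There is no real obstacle in this argument: it is essentially a direct post-processing of Theorem \ref{ThmLinftyBoundForGradient}, using only the smoothness of $u$ to differentiate the equation. The only mild bookkeeping issue is that the extra multiplicative factor $\|f'(u)\|_{L^{\infty}(B_R)}^2$ has to be absorbed into $C$; this does not alter the qualitative dependence of the constant on $n$ and $R^{2}\|f'(u)\|_{L^{\infty}(B_R)}$ claimed in the statement.
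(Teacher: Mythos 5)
Your proof is correct and follows essentially the same route as the paper: differentiate the equation to get $\Delta u_{x_i} = f'(u)\,u_{x_i}$, bound $|f'(u)|$ by its $L^\infty$ norm, sum over $i$ to reduce to $\sup_{B_R}|\nabla u|^2$, and invoke Theorem \ref{ThmLinftyBoundForGradient}. The only cosmetic difference is that you record the pointwise identity $\sum_i|\Delta u_{x_i}|^2 = |f'(u)|^2|\nabla u|^2$ before estimating, whereas the paper passes directly to the inequality $|\Delta u_{x_i}|\le\|f'(u)\|_{L^\infty(B_R)}|u_{x_i}|$; both paths are the same argument.
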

\begin{proof}
By differentiating \eqref{ThmLinftyBoundForGradientSemilinearEq} we have
\begin{equation}\label{CorollaryC3estimatePfEq1}
| \Delta u_{x_i} | \leq || f'(u) ||_{L^{\infty}(B_R)} | u_{x_i}|
\end{equation}
and utilizing \eqref{ThmLinftyBoundForGradientStatementEq} we conclude.
\end{proof}
$ \\ $

At this point, we could ask whether the estimate in \eqref{ThmLinftyBoundForGradientStatementEq} holds without assuming that $ u $ vanishes on the boundary.

Another application is the following.

\begin{corollary}\label{CorollaryLowerBound}
Let $ u \in C^2(\Omega) \cap C^1(\overline{\Omega}) $ be a solution of
\begin{equation}\label{CorLowerBoundSemilinearEq}
\Delta u = f(u)
\end{equation}
such that $ f $ is locally Lipschitz and $ \inf_{\partial \Omega} \left| \dfrac{\partial u}{\partial \nu} \right| >0 $,
then
\begin{equation}\label{CorLowerBoundOnf}
|| f(u) ||_{L^n(\Omega)} \geq n \: | B_1 |^{1/n} \inf_{\partial \Omega} \left| \dfrac{\partial u}{\partial \nu} \right|
\end{equation}
If in addition $ \Omega = B_1 $ and $ u \bigm|_{\partial \Omega} =0 $, it holds
\begin{equation}\label{CorLowerBoundOnf2}
|| f(u) ||_{L^n( B_1)} \geq n \: | B_1 |^{1/n} | u'(1) |
\end{equation}
where $ u=u(r) $ is radially symmetric.
\end{corollary}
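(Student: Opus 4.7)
The plan is to reduce the first inequality to Theorem \ref{ThmLowerBoundForSubharmonic} by absorbing the sign of $\partial u/\partial \nu$ into a sign flip of $u$, and then to obtain the second inequality by evaluating the normal derivative explicitly in the radial setting.

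For \eqref{CorLowerBoundOnf}, I would argue as follows. Since $|\partial u/\partial \nu|$ is continuous on $\partial \Omega$ and bounded below by $\alpha := \inf_{\partial \Omega}|\partial u/\partial \nu| > 0$, the normal derivative cannot vanish, and therefore has constant sign on each connected component of $\partial \Omega$. Assuming first that this sign is uniform on all of $\partial \Omega$, I replace $u$ by $-u$ if needed — an operation that turns $f(u)$ into $-f(u)$ but leaves $\|f(u)\|_{L^n(\Omega)}$ unchanged — to reduce to $\partial u/\partial \nu \geq \alpha > 0$. Then Theorem \ref{ThmLowerBoundForSubharmonic} yields $\|\Delta u\|_{L^n(\Omega)} \geq n\alpha |B_1|^{1/n}$, and since $\Delta u = f(u)$ this is exactly \eqref{CorLowerBoundOnf}.

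For \eqref{CorLowerBoundOnf2}, radial symmetry gives $\nabla u(x) = u'(|x|)\, x/|x|$, so on $\partial B_1$ the outward unit normal $\nu$ coincides with $x$ and $\partial u/\partial \nu = u'(1)$ is constant along the boundary. If $u'(1) = 0$ the inequality is trivial; otherwise $\inf_{\partial B_1}|\partial u/\partial \nu| = |u'(1)| > 0$, and substituting $\alpha = |u'(1)|$ into \eqref{CorLowerBoundOnf} produces \eqref{CorLowerBoundOnf2}.

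The only delicate point is the sign normalization: if $\partial \Omega$ is disconnected and $\partial u/\partial \nu$ takes opposite signs on different components, the flip $u \to -u$ cannot make the derivative uniformly positive. In that case I would bypass Theorem \ref{ThmLowerBoundForSubharmonic} and apply Lemma \ref{Lemma2} directly — either alternative of the lemma, combined with the arithmetic–geometric mean inequality applied to the (nonnegative on $\Gamma_u$, nonpositive on $\Gamma^u$) eigenvalues of $\nabla^2 u$, gives $|B_1|\alpha^n \leq n^{-n}\int_\Omega |\Delta u|^n\, dx$, and the conclusion follows as before. The local Lipschitz hypothesis on $f$ plays no quantitative role in the estimate; it serves only to guarantee that $f(u)$ is a well-defined continuous function on $\overline{\Omega}$, hence in $L^n(\Omega)$.
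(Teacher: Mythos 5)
Your route to \eqref{CorLowerBoundOnf} is essentially the paper's: plug $\Delta u = f(u)$ into Theorem \ref{ThmLowerBoundForSubharmonic}. Your extra remark about flipping $u\to-u$ to normalize $\partial u/\partial\nu>0$ is a genuine (and needed) clarification, since Theorem \ref{ThmLowerBoundForSubharmonic} as stated assumes $\partial u/\partial\nu\geq\alpha>0$ while the corollary only controls $\left|\partial u/\partial\nu\right|$. However, your proposed patch for the mixed-sign situation does not work: both alternatives of Lemma \ref{Lemma2} (and of Lemma \ref{Lemma1} behind it) are conditional on $\partial u/\partial\nu$ having a \emph{uniform} sign on all of $\partial\Omega$ --- the contradiction in the proof of Lemma \ref{Lemma1} is extracted at a boundary minimizer or maximizer of $u(x)-z\cdot x$, and it only fires if the sign constraint holds at that extremal point. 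If $\partial\Omega$ has components with opposite signs, the extremizer may land on the ``wrong'' component, neither alternative applies, and your claimed inclusion $B_\alpha(0)\subset\nabla u(\Gamma_u)$ (or $\Gamma^u$) is no longer justified. You correctly flagged the subtlety but did not resolve it; the paper's own proof silently skips it too.

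For \eqref{CorLowerBoundOnf2}, you treat ``$u=u(r)$ is radially symmetric'' as a hypothesis and merely compute $\partial u/\partial\nu=u'(1)$. The paper instead \emph{derives} radial symmetry from $\Omega=B_1$, $u|_{\partial\Omega}=0$, and the locally Lipschitz regularity of $f$ via Gidas--Ni--Nirenberg. This is in fact where the Lipschitz hypothesis is used, so your closing claim that it ``plays no quantitative role'' and ``serves only to guarantee $f(u)$ is continuous'' is off on both counts: continuity of $f(u)$ is automatic since $f(u)=\Delta u\in C^0$ by the $C^2$ hypothesis, and the Lipschitz assumption is doing real work in the symmetry step you skipped.
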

\begin{proof}
The inequality \eqref{CorLowerBoundOnf} is a direct consequence of Theorem \ref{ThmLowerBoundForSubharmonic}.
$ \\ $
Next, since $ \Omega $ is a unit ball we have that $ u $ is radially symmetric (see \cite{GNN} or \cite{HL}), that is, $ u = u(r) $ and $ \dfrac{\partial u}{\partial \nu} = \dfrac{\partial u}{\partial r} = u'(1) <0 $ on $ \partial \Omega $.
\end{proof}

$ \\ $

\begin{remark}\label{RmkCorollaryLowerBound}
\textbf{(1)} The symmetry property in Corollary \ref{CorollarySymmetryProperty} can be alternatively considered as uniqueness of the minimization problem
\begin{equation}\label{RmkCorSymMin}
\mathscr{J} (u) = \int_{B_1} | \Delta u |^n dx 
\end{equation}
subject to the constraint $ \dfrac{\partial u}{\partial \nu} \bigm|_{\partial B_1}  \geq \alpha >0 $, or even for the functionals $$ \mathscr{J}_p (u) = \int_{B_1} | \Delta u|^p dx \;\: \textrm{for}\;\: p>n \;,\; \mathscr{J}_{\infty} (u) = || \Delta u ||_{L^{\infty}(B_1)}, $$ when $ \dfrac{\partial u}{\partial \nu} \bigm|_{\partial B_1}  = \alpha >0 . \\ $
\textbf{(2)} In particular, if $ f(u) = c_0 | u |^{p/n} $ in Corollary \ref{CorollaryLowerBound} where $ p>0 $ and $ \dfrac{\partial u}{\partial \nu} \geq \alpha >0 $ on $ \partial \Omega $, then
\begin{equation}\label{RmkLowerBoundOnf2}
|| u ||_{L^p(\Omega)} \geq \left[ \frac{(n \alpha)^n \: | B_1 |}{c_0} \right]^{1/p} 
\end{equation}
Similarly, we can obtain other bounds by considering various examples.
\end{remark}

$ \\ $

\subsection{Applications: Lower bounds for eigenvalue problems}

\subsubsection{A Lower bound for eigenvalue problems of the Laplacian}

In this subsection we derive eigenvalue inequalities for the Laplacian for both the Dirichlet and the Robin eigenvalue problems. These inequalities are consequences of the Theorem \ref{ThmLowerBoundForSubharmonic}. We begin with the Dirichlet eigenvalue problem. $ \\ $

\begin{corollary}\label{CorEigenvalueIneqLaplacian}
Let $ \phi \in C^2( \Omega ) \cap C^1 (\overline{\Omega}) $ be the eigenfunction of the problem
\begin{equation}\label{CorEigenvIneqLaplEq}
\begin{cases} - \Delta \phi = \lambda \phi \;\;\;,\;\; \textrm{in} \;\: \Omega \\
\phi = 0 \;\;\;\;\;\;\;\;\;\;\;,\;\; \textrm{on} \;\: \partial \Omega
\end{cases}
\end{equation}
Then
\begin{equation}\label{CorEigenvIneqLaplStatement}
\lambda \geq n | B_1 |^{1/n} \frac{ \inf_{\partial \Omega} | \nabla \phi | }{|| \phi ||_{L^n(\Omega)} }
\end{equation}
\end{corollary}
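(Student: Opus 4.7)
The plan is to reduce the statement directly to Theorem \ref{ThmLowerBoundForSubharmonic} using the eigenvalue equation. First I would observe that since $\phi = 0$ on $\partial \Omega$, every tangential derivative of $\phi$ along $\partial \Omega$ vanishes, so the full gradient is normal to the boundary. This gives the pointwise identity
\begin{equation*}
\left| \frac{\partial \phi}{\partial \nu} \right| = |\nabla \phi| \;\;\; \text{on} \;\: \partial \Omega,
\end{equation*}
so $\inf_{\partial \Omega} |\partial \phi/\partial \nu| = \inf_{\partial \Omega} |\nabla \phi| =: \alpha$. If $\alpha = 0$ the claimed inequality is trivial, so assume $\alpha > 0$.

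Next, I would arrange the sign of the normal derivative to match the hypothesis of Theorem \ref{ThmLowerBoundForSubharmonic}. After possibly replacing $\phi$ with $-\phi$ (which does not change $|\Delta \phi|$, $\|\phi\|_{L^n(\Omega)}$, or $|\nabla \phi|$), we may take $\partial \phi/\partial \nu \geq \alpha > 0$ on $\partial \Omega$. Strictly speaking, the argument applies cleanly when $\partial \phi/\partial \nu$ has constant sign on $\partial \Omega$; for the principal eigenfunction this is automatic from the Hopf boundary lemma, and in general one can either use the sign-flexible version recorded in Remark \ref{RmkForThmABPnormalDer} (applied to the contact sets $\Gamma_\phi$ or $\Gamma^\phi$ according to which sign prevails), or argue component by component on $\partial \Omega$.

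With the sign fixed, Theorem \ref{ThmLowerBoundForSubharmonic} yields
\begin{equation*}
\| \Delta \phi \|_{L^n(\Omega)} \geq n\, \alpha\, |B_1|^{1/n} = n\, |B_1|^{1/n} \inf_{\partial \Omega} |\nabla \phi|.
\end{equation*}
The final step is to evaluate the left-hand side using the eigenvalue equation $-\Delta \phi = \lambda \phi$, which gives $\|\Delta \phi\|_{L^n(\Omega)} = \lambda \|\phi\|_{L^n(\Omega)}$. Dividing by $\|\phi\|_{L^n(\Omega)}$ produces exactly the inequality \eqref{CorEigenvIneqLaplStatement}.

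The only genuine subtlety is the sign issue on $\partial \Omega$ when $\phi$ is not the first eigenfunction, so that $\partial \phi/\partial \nu$ may change sign on different components of the boundary; this is the step where one has to invoke the two-sided version of Theorem \ref{ThmLowerBoundForSubharmonic} (or equivalently Lemma \ref{Lemma1} applied separately on each sign regime) rather than a direct application. The rest of the argument is a short computation once the normal derivative and the gradient have been identified on $\partial \Omega$.
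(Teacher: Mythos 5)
Your proposal takes exactly the paper's route: identify $|\partial \phi/\partial \nu| = |\nabla\phi|$ on $\partial\Omega$ via $\phi\mid_{\partial\Omega}=0$, invoke Theorem~\ref{ThmLowerBoundForSubharmonic}, and then substitute $\Delta\phi = -\lambda\phi$ to pass to the $L^n$-norm of $\phi$. You additionally flag (and correctly resolve, via $\phi\mapsto -\phi$ or Remark~\ref{RmkForThmABPnormalDer}) the sign issue for $\partial\phi/\partial\nu$, which the paper's proof leaves implicit; this is a reasonable added precision but does not change the argument.
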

\begin{proof}
By Theorem \ref{ThmLowerBoundForSubharmonic} we have
\begin{equation}\label{CorEigInLaplProofEq1}
\begin{gathered}
|| \Delta \phi ||_{L^n(\Omega)} \geq n | B_1 |^{1/n} \inf_{\partial \Omega} \left| \dfrac{\partial u}{\partial \nu} \right| \\
\Rightarrow | \lambda | \geq n | B_1 |^{1/n} \frac{\inf_{\partial \Omega} | \frac{\partial \phi}{\partial \nu} | }{|| \phi ||_{L^n(\Omega)}}
\end{gathered}
\end{equation}
Since $ u $ is zero on $ \partial \Omega $, it holds that $ \nabla u // \nu $, so
\begin{equation}\label{CorEigInLaplProofEq2}
\frac{\partial u}{\partial \nu} = | \nabla u \cdot \nu | = | \nabla u | \;\;\;,\; \textrm{on} \;\: \partial \Omega
\end{equation}
and we conclude.
\end{proof}

$ \\ $
\textbf{Note:} If $ \Omega =B_1 $, $ \phi $ is radially symmetric given explicitly by a Bessel function and $ \inf_{\partial \Omega} | \nabla \phi | = | \phi'(1) | $. If in addition $ n=3 $, we can write
\begin{align}\label{RmkBesselFunction}
\phi_1 (x) = \mu \sqrt{\frac{2}{\pi \lambda_1}} \frac{sin( \sqrt{\lambda_1} |x|)}{|x|} \;\;,\; \textrm{for some positive constant} \;\: \mu,
\end{align}
here we denote as $ \phi_1 $ the principal eigenfuction of the Laplacian and $ \lambda_1 $ the respective principal eigenvalue. Then we can calculate
\begin{align}\label{RmkEigenInLaplEq1}
|| \phi_1 ||_{L^3(\Omega)} \leq \mu \sqrt{\frac{2}{\pi}} | B_1 |^{1/3}
\end{align}
and inequality \eqref{CorEigenvIneqLaplStatement} can be written as
\begin{equation}\label{RmkEigenInLaplEq2}
\lambda_1^{3/2} \geq 3 | \sqrt{\lambda_1} cos(\sqrt{\lambda_1}) - sin(\sqrt{\lambda_1}) |
\end{equation}
By considering the function $ f(t) = \dfrac{sint}{t^3} - \dfrac{cos t}{t^2} \;,\; t >0 $ we can calculate the values $ t_1 < t_2 < t_3 $ where $ f(t) = \frac{1}{3} $ and observe that $ \lambda_1 \geq t_3^2 $.
This bound, of course, is known since $ \lambda_1 $ is the square of the first root of the Bessel function in \eqref{RmkBesselFunction}. However, the lower bound in \eqref{CorEigenvIneqLaplStatement} can be generalized to fully nonlinear equations for both Dirichlet and Robin eigenvalue problems.

$ \\ $
\begin{remark}\label{RmkEigenInLapl}
Let $ L_0 u = \sum_{i,j}a_{ij}(x) \partial_{x_i x_j}u + \sum_i b_i(x) \partial_{x_i}u $ be an elliptic operator and consider the eigenvalue problem
\begin{equation}\label{RmkEigenInBNVandCabre}
\begin{cases} - L_0 u_1 = \lambda_1 u_1 \;\:,\; \textrm{in} \;\: \Omega \\
\;\;\;\;\;\; u_1 = 0 \;\;\;\;\;\;\;,\;\; \textrm{on} \;\: \partial \Omega
\end{cases}
\end{equation}
In \cite{BNV}, Berestycki, Nirenberg and Varadhan proved the existence of a unique eigenvalue $ \lambda_1=\lambda_1(L_0, \Omega) >0 $ (i.e. the principal eigenvalue) having a smooth and positive eigenfunction $ u_1 $ that satisfies \eqref{RmkEigenInBNVandCabre}. Also, in Theorem 2.5 in \cite{BNV} they established the lower bound $$ \lambda_1 \geq C | \Omega |^{-2/n} $$ for some positive constant $ C $ depending only on $ n $, the ellipticity constants of $ L_0 $ and an upper bound on $ | \Omega |^{1/n} ||b||_{L^{\infty}(\Omega)} $. In \cite{Cabre1}, X. Cabré gave a simpler proof of this lower bound using the ABP method.
\end{remark}

$ \\ $

Next, we derive a similar bound for the Robin eigenvalue problem.

\begin{corollary}\label{CorEigenvalueIneqLaplRobin}
Let $ \phi \in C^2( \Omega ) \cap C^1 (\overline{\Omega}) $ be the eigenfunction of the problem
\begin{equation}\label{CorEigenvIneqLaplRobinEq}
\begin{cases} - \Delta \phi = \lambda \phi \;\;\;\;,\;\; \textrm{in} \;\: \Omega \\
\frac{\partial \phi}{\partial \nu} + \alpha \phi = 0 \;\;,\;\; \textrm{on} \;\: \partial \Omega
\end{cases}
\end{equation}
where $ \alpha \neq 0 $.

Then
\begin{equation}\label{CorEigenvIneqLaplRobinStatement}
| \lambda | \geq n | B_1 |^{1/n} | \alpha | \frac{ \inf_{\partial \Omega} | \phi | }{|| \phi ||_{L^n(\Omega)} }
\end{equation}
\end{corollary}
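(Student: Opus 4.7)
The plan is to mirror the Dirichlet argument of Corollary \ref{CorEigenvalueIneqLaplacian} but to substitute the Robin condition in place of the identity $|\partial \phi/\partial \nu| = |\nabla \phi|$. Concretely, I would first assume without loss of generality that $\inf_{\partial \Omega}|\phi|>0$, since otherwise the right-hand side of \eqref{CorEigenvIneqLaplRobinStatement} is zero and the inequality is trivial. Under this assumption the Robin condition $\partial \phi/\partial \nu = -\alpha\,\phi$ forces
\begin{equation*}
\inf_{\partial \Omega} \left|\frac{\partial \phi}{\partial \nu}\right| \;=\; |\alpha|\,\inf_{\partial \Omega} |\phi| \;>\;0,
\end{equation*}
so in particular $\partial\phi/\partial\nu$ does not vanish on $\partial\Omega$, which is the hypothesis needed to invoke Theorem \ref{ThmLowerBoundForSubharmonic} (through Lemma \ref{Lemma1}, applied on whichever contact set $\Gamma_\phi$ or $\Gamma^\phi$ corresponds to the sign of $\partial\phi/\partial\nu$).

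Next I would apply Theorem \ref{ThmLowerBoundForSubharmonic} to $\phi$, obtaining
\begin{equation*}
\|\Delta \phi\|_{L^n(\Omega)} \;\geq\; n\,|B_1|^{1/n}\,\inf_{\partial \Omega}\left|\frac{\partial \phi}{\partial \nu}\right| \;=\; n\,|B_1|^{1/n}\,|\alpha|\,\inf_{\partial \Omega}|\phi|.
\end{equation*}
The eigenvalue equation $-\Delta \phi = \lambda\,\phi$ then gives $\|\Delta \phi\|_{L^n(\Omega)} = |\lambda|\,\|\phi\|_{L^n(\Omega)}$, and dividing through by $\|\phi\|_{L^n(\Omega)}$ yields exactly \eqref{CorEigenvIneqLaplRobinStatement}.

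There is essentially no hard obstacle here; the proof is a two-line rewriting of the Dirichlet case. The only subtle point is sign handling: unlike the Dirichlet case, $\phi$ need not vanish on $\partial\Omega$, so one cannot invoke the simplification $\partial\phi/\partial\nu = |\nabla\phi|$. Instead one uses the Robin relation to convert the boundary normal derivative into $|\alpha||\phi|$, and one uses the absolute-value version of the lower bound (which follows from Lemma \ref{Lemma1} by choosing $\Gamma_\phi$ or $\Gamma^\phi$ according to the sign of $\partial\phi/\partial\nu$, and is already the form in which Theorem \ref{ThmLowerBoundForSubharmonic} was proved). If one prefers not to worry about the sign of $\phi$ or of $\partial\phi/\partial\nu$, one can split $\partial\Omega$ into the subsets where $\phi>0$ and $\phi<0$ and argue separately; since only the infimum of $|\partial\phi/\partial\nu|$ enters, this makes no difference.
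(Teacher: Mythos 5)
Your proof is correct and follows essentially the same route as the paper: the paper's entire argument for this corollary is the single observation that $\inf_{\partial\Omega}\bigl|\partial\phi/\partial\nu\bigr| = |\alpha|\,\inf_{\partial\Omega}|\phi|$, after which one "concludes as in Corollary~\ref{CorEigenvalueIneqLaplacian}," i.e.\ applies Theorem~\ref{ThmLowerBoundForSubharmonic} and substitutes $\Delta\phi = -\lambda\phi$. You spell out the same two steps, plus the (correct) reduction to the case $\inf_{\partial\Omega}|\phi|>0$.

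One small caveat worth flagging: your closing remark about splitting $\partial\Omega$ into the pieces where $\phi>0$ and $\phi<0$ would not actually repair the sign issue, because Lemma~\ref{Lemma1} is a global statement about the extremum of the Legendre-type transform $u(x)-z\cdot x$ over all of $\overline{\Omega}$; it needs $\partial u/\partial\nu$ to have a single sign on the \emph{entire} boundary, not component by component. If $\partial\Omega$ is connected this is automatic once $\inf_{\partial\Omega}|\phi|>0$, and that is the regime in which the paper's argument (and yours) is tight. When $\partial\Omega$ has several components and $\phi$ takes opposite signs on different ones, neither Lemma~\ref{Lemma1} nor Theorem~\ref{ThmLowerBoundForSubharmonic} applies directly; but the paper's own one-line proof has exactly the same implicit restriction, so this is not a defect of your argument relative to the paper's.
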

$ \\ $
\begin{proof}
We note that $ \inf_{\partial \Omega} | \frac{\partial \phi}{\partial \nu} | = | \alpha | \inf_{\partial \Omega} | \phi| $ and we conclude as in Corollary \ref{CorEigenvalueIneqLaplacian}.
\end{proof}

$ \\ $

\begin{remark}\label{RmkQuasiLinear}
By Theorem \ref{ThmLowerBoundForSubharmonic}, utilizing  \eqref{DetBoundByL}, the results in Corollaries \ref{CorEigenvalueIneqLaplacian} and \ref{CorEigenvalueIneqLaplRobin} can be extended for Quasi-linear operators of the form
\begin{equation}\label{Quasi-LinearEigenvalueEq}
\begin{cases} - \sum_{i,j} a_{ij}(x,u, \nabla u) u_{x_i x_j} = \lambda u \;\;\;,\; \textrm{in} \;\: \Omega \\
\;\;\;\;\;\;\;\;\;\;\;\;\;\; u =0 \;\;\;\;\;\;\;\;\;\;\;\;\;\;\;\;\;\;\;\;\;\;, \; \textrm{on} \;\: \partial \Omega
\end{cases}
\end{equation}
where $ a_{ij} $ satisfy the ellipticity condition
\begin{equation}\label{EllipticityConditionQuasiLinear}
\tilde{\theta} | \xi |^2 \leq \sum_{i,j} a_{ij}(x,u, \nabla u) \xi_i \xi_j \leq \tilde{\Theta} | \xi |^2
\end{equation}
for some constants $ 0< \tilde{\theta} \leq \tilde{\Theta} $.
\end{remark}

$ \\ $

\section{Eigenvalue inequalities for the Monge-Ampère equation}

We will now consider solutions of the Monge-Ampère equation
\begin{equation}\label{MongeAmpereEq}
det( \nabla^2 u) = f(x,u, \nabla u) \;\;\;,\;\; u : \Omega \subset \mathbb{R}^n \rightarrow \mathbb{R}
\end{equation}

The first estimate is similar to that of Theorem \ref{ThmNormalDerABPestimate}.

\begin{theorem}\label{ThmNormalDerABPMongeAmpere}
Let $ u \in C^2(\Omega) \cap C^1(\overline{ \Omega}) $ be a solution of \eqref{MongeAmpereEq} and assume that $ \inf_{\partial \Omega} \dfrac{\partial u}{\partial \nu} >0 $.

Then
\begin{equation}\label{NormalDerABPMongeAmpereStatement}
\inf_{\partial \Omega} \frac{\partial u}{\partial \nu}  \leq\left( \frac{|| f ||_{L^1(\Gamma_u)}}{|B_1|} \right)^{1/n}
\end{equation}
\end{theorem}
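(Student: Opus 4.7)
The plan is to mirror the proof of Theorem \ref{ThmNormalDerABPestimate} but replace the AM–GM detour through $(-\sum a_{ij} \partial_{x_ix_j} u)^n$ with a direct use of the Monge–Ampère equation, since here we already have $\det(\nabla^2 u)$ on the left. Set $m := \inf_{\partial \Omega} \partial u / \partial \nu > 0$.

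First, I would invoke Lemma \ref{Lemma1}, whose hypotheses are satisfied, to obtain the inclusion
\begin{equation*}
B_m(0) \subset \nabla u(\Gamma_u).
\end{equation*}
Next, I would apply Lemma \ref{Lemma2} with the constant choice $g \equiv 1$, which yields
\begin{equation*}
|B_1|\, m^n = |B_m(0)| \leq \int_{\Gamma_u} |\det(\nabla^2 u)|\, dx.
\end{equation*}

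The key observation to close the argument is that on the lower contact set $\Gamma_u$, the Hessian $\nabla^2 u$ is positive semidefinite (every tangent hyperplane lies below the graph), so $\det(\nabla^2 u) \geq 0$ on $\Gamma_u$, and hence the absolute value can be dropped. Substituting the Monge–Ampère equation $\det(\nabla^2 u) = f(x,u,\nabla u)$ then gives
\begin{equation*}
|B_1|\, m^n \leq \int_{\Gamma_u} f(x, u, \nabla u)\, dx \leq \|f\|_{L^1(\Gamma_u)},
\end{equation*}
from which the stated inequality follows by taking $n$-th roots.

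There is no real obstacle here; the only subtlety worth flagging is the sign/integrability point above (that $f \geq 0$ on $\Gamma_u$ as a consequence of $\Gamma_u$ being a lower contact set), which makes the right-hand side $\|f\|_{L^1(\Gamma_u)}$ meaningful and allows the bound to be read as a genuine $L^1$ norm rather than an integral of a signed quantity.
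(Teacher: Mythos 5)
Your proof is correct and is essentially the paper's own argument: invoke Lemma \ref{Lemma1} for the inclusion $B_m(0)\subset\nabla u(\Gamma_u)$, apply Lemma \ref{Lemma2} with $g\equiv1$, and then substitute the Monge--Amp\`ere equation. Your remark that $\nabla^2 u$ is positive semidefinite on $\Gamma_u$ (so the absolute value is harmless and $f\geq0$ there) is a useful clarification that the paper leaves implicit.
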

$ \\ $
\begin{proof}
With similar arguments as in the proof of Theorem \ref{ThmLowerBoundForSubharmonic} and applying Lemmas \ref{Lemma1}, \ref{Lemma2}, we obtain
\begin{equation}\label{ProofThmNormalDerABPMongeAmpereEq1}
\left( \inf_{\partial \Omega} \frac{\partial u}{\partial \nu} \right)^n | B_1 | \leq \int_{\Gamma_u} | det( \nabla^2 u) |dx
\end{equation}
and by \eqref{MongeAmpereEq} we conclude.
\end{proof}

$ \\ \\ $
\textbf{Note:} In the case where $ \inf_{\partial \Omega} \dfrac{\partial u}{\partial \nu} <0 $, it holds
\begin{equation}\label{NormalDerABPMongeAmpereStatement2note}
\sup_{\partial \Omega} \frac{\partial u}{\partial \nu} \geq - \left( \frac{|| f ||_{L^1(\Gamma^u)}}{|B_1|} \right)^{1/n}
\end{equation}
$ \\ $

We now introduce the eigenvalue problem for the Monge-Ampère operator $ det( \nabla^2 u) $ in an open, smooth, bounded and uniformly convex domain $ \Omega \subset \mathbb{R}^n $. This problem was first studied by Lions in \cite{Lions} where he showed that there exist a unique constant $ \lambda=\lambda( \Omega) >0 $ and a unique (up to positive multiplicative constant) nonzero convex function $ u \in C^{1,1} (\overline{\Omega}) \cap C^{\infty}(\Omega) $ solving the eigenvalue problem
\begin{equation}\label{EigenvalueProblemMongeAmpere}
\begin{cases}  det( \nabla^2 u)  = \lambda | u|^n \;\;\;\;\; \textrm{in} \;\: \Omega \\ u = 0 \;\;\;\;\;\;\;\;\;\;\;\;\;\;\;\;,\;\; \textrm{on} \;\: \partial \Omega
\end{cases}
\end{equation}
$ \\ $

\begin{corollary}\label{CorEigenvIneqMongeAmpere}
Let $ u \in C^2(\Omega) \cap C^1( \overline{\Omega}) $ be a solution to the eigenvalue problem \eqref{EigenvalueProblemMongeAmpere}.

Then
\begin{equation}\label{CorEigenvIneqMongeAmpereStatement}
\lambda \geq | B_1 | \left( \frac{\inf_{\partial \Omega} | \nabla u |}{||u||_{L^n(\Omega)}} \right)^n
\end{equation}
In addition, we have the following estimate
\begin{equation}\label{CorEigenvIneqMongeAmpereStatement2}
\inf_{\partial \Omega} | \nabla u | \leq  \frac{C(n)}{| \Omega |^{2/n}} ||u||_{L^n(\Omega)}
\end{equation}
for some constant $ C(n) $ depending only on the dimension $ n $.
\end{corollary}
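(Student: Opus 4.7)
The plan is to derive both estimates from Theorem~\ref{ThmNormalDerABPMongeAmpere} applied to the Lions eigenfunction $u$. First I would verify the hypothesis $\inf_{\partial\Omega}\partial u/\partial\nu>0$: since $u$ is a nontrivial convex function vanishing on $\partial\Omega$, necessarily $u<0$ in $\Omega$. For each $x_0\in\partial\Omega$, taking $y=x_0-t\nu\in\Omega$ with $t>0$ small in the convexity inequality
$$u(y)\geq u(x_0)+\nabla u(x_0)\cdot(y-x_0)$$
yields $0>u(x_0-t\nu)\geq -t\,\partial u/\partial\nu(x_0)$, so $\partial u/\partial\nu(x_0)>0$ pointwise; continuity and compactness of $\partial\Omega$ then force $\inf_{\partial\Omega}\partial u/\partial\nu>0$. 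Moreover, since $u\equiv 0$ on $\partial\Omega$, the gradient $\nabla u$ is parallel to the outward normal there, so $|\nabla u|=\partial u/\partial\nu$ on $\partial\Omega$.

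For the first estimate, applying Theorem~\ref{ThmNormalDerABPMongeAmpere} with $f=\lambda|u|^n$ and bounding $\|u\|_{L^n(\Gamma_u)}\leq\|u\|_{L^n(\Omega)}$ gives
$$\inf_{\partial\Omega}|\nabla u|\;=\;\inf_{\partial\Omega}\frac{\partial u}{\partial\nu}\;\leq\;\left(\frac{\lambda\int_{\Gamma_u}|u|^n\,dx}{|B_1|}\right)^{1/n}\;\leq\;\left(\frac{\lambda}{|B_1|}\right)^{1/n}\|u\|_{L^n(\Omega)},$$
which after raising to the $n$-th power and rearranging is exactly~\eqref{CorEigenvIneqMongeAmpereStatement}. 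For the second estimate, I would combine this display with the classical upper bound $\lambda(\Omega)\leq C(n)|\Omega|^{-2}$ from Lions' theory of the Monge--Amp\`ere eigenvalue problem; substituting $\lambda^{1/n}\leq C(n)^{1/n}|\Omega|^{-2/n}$ immediately yields~\eqref{CorEigenvIneqMongeAmpereStatement2}.

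The main obstacle is justifying the upper bound $\lambda(\Omega)\leq C(n)|\Omega|^{-2}$. The cleanest route uses the affine invariance of the product $\lambda(\Omega)|\Omega|^2$, which follows from the transformation law $\det(\nabla^2(u\circ T))=(\det T)^2\det(\nabla^2 u)\circ T$ under linear maps and implies $\lambda(T^{-1}\Omega)|T^{-1}\Omega|^2=\lambda(\Omega)|\Omega|^2$, combined with John's ellipsoid theorem to reduce to a normalized convex domain $B_1\subset\tilde\Omega\subset nB_1$. On such a normalized $\tilde\Omega$, an upper bound on $\lambda(\tilde\Omega)$ is obtained via the comparison principle for the Monge--Amp\`ere operator (or via a variational characterization), and unwinding the affine invariance transfers the bound to general $\Omega$. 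Alternatively, this upper bound may simply be cited from Lions' seminal paper on the Monge--Amp\`ere eigenvalue problem.
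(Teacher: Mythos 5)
Your proof is correct and follows essentially the same approach as the paper: apply Theorem~\ref{ThmNormalDerABPMongeAmpere}, then use that $u\equiv 0$ on $\partial\Omega$ gives $|\nabla u|=\partial u/\partial\nu$ there, and invoke $\lambda(\Omega)\le C(n)/|\Omega|^2$. Two small points of divergence: the paper dismisses the degenerate case $\inf_{\partial\Omega}\left|\partial u/\partial\nu\right|=0$ as trivial rather than ruling it out via your convexity argument (your argument is a modest strengthening, and it also justifies the sign assumption needed to apply Theorem~\ref{ThmNormalDerABPMongeAmpere}), and the paper attributes the upper bound on $\lambda$ to Le~\cite{Le}, Theorem~1.1, not to Lions; your affine-invariance/John-ellipsoid sketch is a viable alternative derivation of that bound, but as stated your preferred citation is off.
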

$ \\ $
\begin{proof}
Without loss of generality assume that $ \inf_{\partial \Omega} \dfrac{\partial u}{\partial \nu} >0 $ (in the case where $ \inf_{\partial \Omega} \left| \dfrac{\partial u}{\partial \nu} \right| = 0 $, there is nothing to prove).

We apply Theorem \ref{ThmNormalDerABPMongeAmpere} and obtain
\begin{equation}\label{ProofCorEigenvIneqMongeAmpereEq1}
\lambda^{1/n} || u ||_{L^n(\Omega)} \geq | B_1 |^{1/n} \inf_{\partial \Omega} \dfrac{\partial u}{\partial \nu}
\end{equation}
Now since $ u=0 $ on $ \partial \Omega $, it holds that $ \nabla u \: // \: \nu $, so
\begin{equation}\label{ProofCorEigenvIneqMongeAmpereEq2}
\frac{\partial u}{\partial \nu}  = | \nabla u \cdot \nu | = | \nabla u |
\end{equation}
Thus, by \eqref{ProofCorEigenvIneqMongeAmpereEq1} and \eqref{ProofCorEigenvIneqMongeAmpereEq2} we get \eqref{CorEigenvIneqMongeAmpereStatement}. $ \\ $

Finally, for the estimate \eqref{CorEigenvIneqMongeAmpereStatement2}, we utilize the estimate for the eigenvalue problem in \cite{Le} (see Theorem 1.1), which states that
\begin{align*}
\lambda(\Omega) \leq \frac{C(n)}{| \Omega |^2}
\end{align*}
and we conclude.
\end{proof}
$ \\ $

\begin{remark}\label{RmkEigenvProblemMongeAmpere}
As in the previous section, we can consider the Robin eigenvalue problem
\begin{equation}\label{RmkEigenvProblemMongeAmpereRobin}
\begin{cases}  det( \nabla^2 u)  = \lambda | u|^n \;\;\;\;\; \textrm{in} \;\: \Omega \\ \frac{\partial u}{\partial \nu} + \alpha u = 0 \;\;\;\;\;\;\;\;,\;\; \textrm{on} \;\: \partial \Omega
\end{cases}
\end{equation}
and obtain
\begin{equation}\label{RmkEigenvProblemMongeAmpereRobinBound}
\lambda \geq | B_1 | \: | \alpha | \left( \frac{ \inf_{\partial \Omega} | u| }{|| u||_{L^n(\Omega)}} \right)^n
\end{equation}
\end{remark}

$ \\ $

\section{The case of Fully Nonlinear Elliptic Equations}

For the convenience of the reader we introduce Pucci's extremal operators and the respective spaces of viscosity subsolutions and supersolutions. We follow chapter 3 in \cite{CC}, where the ABP estimate is proved for viscosity solutions of fully nonlinear elliptic equations. These results are originated in \cite{C1}, \cite{C2}.

First we give the definition of viscosity solutions for fully nonlinear equations.

\begin{definition}\label{DefViscosity} Let $ \Omega \subset \mathbb{R}^n $. A continuous function $ u : \Omega \rightarrow \mathbb{R} $ is a viscosity subsolution (respectively supersolution) of the equation
\begin{equation}\label{DefFullyNonlinearEq}
F( \nabla^2 u(x),x) = f(x) \;\;\;,\; x \in \Omega
\end{equation}
when the following condition holds:

If $ x_0 \in \Omega \;,\: \phi \in C^2( \Omega) $ and $ u-\phi $ has a local maximum at $ x_0 $ then
\begin{equation}\label{DefViscSubsol}
F( \nabla^2 \phi (x_0),x_0) \geq f(x_0)
\end{equation}
(respectively if $ u- \phi $ has a local minimum at $ x_0 $ then $ F( \nabla^2 \phi (x_0),x_0) \leq f(x_0) $).

If $ u $ is both subsolution and supersolution we say that $ u $ is a viscosity solution of \eqref{DefFullyNonlinearEq}.
\end{definition}

We use the notation $ F( \nabla^2 u,x) = f(x) $ in the viscosity sense in $ \Omega $ whenever $ u $ is a viscosity solution (or $ \geq \:, \; \leq $ for subsolution, supersolution respectively).

Let now $ 0 < \vartheta \leq \varTheta $ and $ A \in \mathcal{S} $, where $ \mathcal{S} $ is the space of real $ n \times n $ symmetric matrices. We define
\begin{align}\label{PucciOperators}
\mathcal{M}^- (A, \vartheta , \varTheta) = \mathcal{M}^- (A) = \vartheta \sum_{e_i>0} e_i + \varTheta \sum_{e_i <0} e_i \\
\mathcal{M}^+ (A, \vartheta , \varTheta) = \mathcal{M}^+ (A) = \varTheta \sum_{e_i>0} e_i + \vartheta \sum_{e_i <0} e_i
\end{align}
where $ e_i = e_i(A) $ are the eigenvalues of $ A $ and $ \mathcal{M}^- \;,\: \mathcal{M}^+ $ are the Pucci's extremal operators.

It holds that $ \mathcal{M}^- $ and $ \mathcal{M}^+ $ are uniformly elliptic (see section 2 in \cite{CC}).

\begin{definition}\label{SpaceOfViscositySol}
Let $ f  $ be a continuous function in $ \Omega $ and $ 0< \vartheta \leq \varTheta . \\ $ (1) We denote as $ \mathscr{M}^+ ( \vartheta, \varTheta, f) $ the space of continuous functions $ u $ in $ \Omega $ such that $$ \mathcal{M}^+ ( \nabla^2 u, \vartheta, \varTheta) \geq f(x) $$ in the viscosity sense.
$ \\ $ (2) Similarly, we denote as $ \mathscr{M}^- ( \vartheta, \varTheta, f) $ the space of continuous functions $ u $ in $ \Omega $ such that $$ \mathcal{M}^- ( \nabla^2 u, \vartheta, \varTheta) \leq f(x) $$ in the viscosity sense. $ \\ $ (3) We set $ \mathscr{M} ( \vartheta, \varTheta, f) = \mathscr{M}^+ ( \vartheta, \varTheta, f) \cap \mathscr{M}^- ( \vartheta, \varTheta, f) $.
\end{definition}
$ \\ $

We will now prove the analog of Theorem \ref{ThmNormalDerABPestimate}.

\begin{theorem}\label{ThmNormalDerABPfullyNonlinear}
Let $ u \in \mathscr{M}^- ( \vartheta, \varTheta, f) \cap C^1( \overline{B}_r) $ in $ B_r $, where $ B_r \subset \mathbb{R}^n $ is an open ball of radius $ r $ and $ f $ is a continuous bounded function. Assume that $ \inf_{\partial \Omega} \left| \dfrac{\partial u}{\partial \nu} \right| >0 $.

Then
\begin{equation}\label{ThmNormalDerABPfullyNonlStatementEq}
\inf_{\partial \Omega} \left| \dfrac{\partial u}{\partial \nu} \right| \leq C \left( \int_{B_r \cap \Gamma_u} (f^+)^n \right)^{1/n}
\end{equation}
where $ C $ depends only on $ n \;,\: \vartheta \;,\: \varTheta $.
\end{theorem}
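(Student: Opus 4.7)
The plan is to mirror the proof of Theorem \ref{ThmNormalDerABPestimate} in the viscosity setting: apply Lemmas \ref{Lemma1} and \ref{Lemma2} to convert the infimum of $|\partial u/\partial \nu|$ into an integral of $\det(\nabla^2 u)$ over the contact set $\Gamma_u$, and then use the subsolution property $\mathcal{M}^-(\nabla^2 u) \leq f$ together with the arithmetic--geometric mean inequality to bound the integrand by $(f^+)^n$. In the linear case, the analogous bound came from $(-\sum a_{ij} u_{x_i x_j})^n$ via \eqref{DetBoundByL}; here it will come directly from the structure of $\mathcal{M}^-$ on matrices with nonnegative spectrum.

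Concretely, since $u \in C^1(\overline{B}_r)$, Lemma \ref{Lemma1} applies and gives $B_m(0) \subset \nabla u(\Gamma_u)$ (after possibly replacing $u$ by $-u$), where $m := \inf_{\partial B_r} |\partial u/\partial \nu|$. Lemma \ref{Lemma2} with $g \equiv 1$ then yields
\begin{equation*}
|B_1|\, m^n \leq \int_{B_r \cap \Gamma_u} |\det(\nabla^2 u)|\, dx.
\end{equation*}
On $\Gamma_u$, at any point $x$ where $u$ admits a second-order Taylor expansion, the definition of $\Gamma_u$ forces the (Alexandrov) Hessian to be nonnegative definite, so all eigenvalues $e_i$ of $\nabla^2 u(x)$ are nonnegative and
\begin{equation*}
\mathcal{M}^-(\nabla^2 u(x)) = \vartheta \sum_i e_i = \vartheta\, \operatorname{tr}(\nabla^2 u(x)) \geq \vartheta\, n\, \bigl(\det \nabla^2 u(x)\bigr)^{1/n}
\end{equation*}
by AM--GM. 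The viscosity inequality $\mathcal{M}^-(\nabla^2 u) \leq f$ can then be tested against the paraboloid coming from the Alexandrov expansion, yielding $\det \nabla^2 u(x) \leq (f^+(x)/(n\vartheta))^n$ pointwise a.e.\ on $\Gamma_u$. Substituting back,
\begin{equation*}
|B_1|\, m^n \leq \frac{1}{(n\vartheta)^n}\int_{B_r \cap \Gamma_u}(f^+)^n\, dx,
\end{equation*}
which is the theorem with $C = (n\vartheta\, |B_1|^{1/n})^{-1}$, a constant depending only on $n$ and $\vartheta$ (and hence on the ellipticity constants $\vartheta, \varTheta$).

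The main obstacle is justifying the $C^2$-type manipulations when $u$ is only $C^1$ and a viscosity (not classical) subsolution. Two points need care: (i) Lemma \ref{Lemma2} was stated assuming $u \in C^2$, and the ``$\det \nabla^2 u$'' appearing in its right-hand side must now be interpreted as the Alexandrov Hessian on $\Gamma_u$; (ii) the pointwise bound $\mathcal{M}^-(\nabla^2 u) \leq f$ a.e.\ on $\Gamma_u$ must be extracted from the viscosity definition. Both are handled by standard machinery from Chapter 3 of \cite{CC}: on $\Gamma_u$ the function $u$ is locally touched from below by affine maps, so Alexandrov's theorem gives a second-order expansion a.e., the area formula extends Lemma \ref{Lemma2} to this setting, and the paraboloid produced by the Alexandrov expansion is an admissible test function in Definition \ref{DefViscosity}, which upgrades the viscosity inequality to the pointwise one. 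Alternatively, one may sup-convolute $u$ to obtain semi-concave approximants in $\mathscr{M}^-(\vartheta,\varTheta,f+o(1))$, apply the bound for them, and pass to the limit; the constant $C$ is stable because it does not depend on regularity.
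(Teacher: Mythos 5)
Your strategy matches the paper's: both proofs replace the usual ABP ``drop from the boundary'' by Lemma~\ref{Lemma1} (so that $\inf_{\partial\Omega}|\partial u/\partial\nu|$ plays the role of $\sup u^-/r$), and both then defer to the convex-envelope machinery of Chapter~3 of \cite{CC} to handle the viscosity setting; your explicit AM--GM computation $\mathcal{M}^-(\nabla^2 u)=\vartheta\,\mathrm{tr}(\nabla^2 u)\ge n\vartheta(\det\nabla^2 u)^{1/n}$ on $\Gamma_u$ is exactly what \cite{CC}'s Theorem~3.2 uses, just written out. Two imprecisions are worth repairing, though. First, the line ``on $\Gamma_u$ the function $u$ is locally touched from below by affine maps, so Alexandrov's theorem gives a second-order expansion a.e.'' is not a correct justification: being touched from below by affines is the definition of $\Gamma_u$, not a regularity statement about $u$. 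What Alexandrov actually applies to is the \emph{convex envelope} $\tilde\Gamma u$, and the determinant that appears in the Lemma~\ref{Lemma2} substitute must be $\det\nabla^2\tilde\Gamma u$, not $\det\nabla^2 u$; the paper is careful to note precisely that on $\Gamma_u$ one has $u=\tilde\Gamma u$ and $\nabla u=\nabla\tilde\Gamma u$ (because $u\in C^1$), and that the $C^{1,1}$ regularity of $\tilde\Gamma u$ near the contact set (from paraboloid touching, Lemma~3.5 in \cite{CC}) is what makes the area formula and the pointwise Hessian bound legitimate. Your sup-convolution remark is a valid alternative, but ``affine touching implies Alexandrov'' is not. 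Second, the parenthetical ``after possibly replacing $u$ by $-u$'' is not innocuous: $\mathcal{M}^-(\nabla^2(-u))=-\mathcal{M}^+(\nabla^2 u)$, so $-u\in\mathscr{M}^-(\vartheta,\varTheta,f)$ is \emph{not} implied by $u\in\mathscr{M}^-(\vartheta,\varTheta,f)$. Since the statement integrates over $\Gamma_u$ (the lower contact set) and the AM--GM step needs $\nabla^2\tilde\Gamma u\ge 0$, the proof implicitly requires the $\partial u/\partial\nu>0$ branch of Lemma~\ref{Lemma1}; this should be stated rather than finessed by a reflection.
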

$ \\ $
\begin{proof}
The proof is similar to the proof of Theorem 3.2 in \cite{CC} by utilizing Lemma \ref{Lemma1}.

In this case, the radius of the ball $ B_r $ do not appear in the inequality \eqref{ThmNormalDerABPfullyNonlOmegaStatementEq}, since the term $ \dfrac{1}{r}\sup_{B_r} u^- $ is replaced by the term $ \inf_{\partial \Omega} \left| \dfrac{\partial u}{\partial \nu} \right| $. So the equation (3.7) in the proof of Lemma 3.4 in \cite{CC} becomes $$ C(n) \left( \inf_{\partial \Omega} \left| \dfrac{\partial u}{\partial \nu} \right| \right)^n \leq  | \nabla u  ( \Gamma_u) | = | \nabla \tilde{\Gamma} u ( \Gamma_u) | ,$$ where $ \tilde{\Gamma} u $ is the convex envelope of $ u $ (see Definition 3.1 in \cite{CC}). We also used the fact that since $ u \in C^1 $, the convex envelope of $ u $ equals $ u $ in the set $ \Gamma_u $ ($ \Gamma_u $ is defined in \eqref{LowerContactSetΓ_u}).
\end{proof}

$ \\ $

\begin{theorem}\label{ThmNormalDerABPfullyNonlinearOmega}
Let $ u \in \mathscr{M}^- ( \vartheta, \varTheta, f) \cap C^1( \overline{\Omega}) $ where $ \Omega $ is a bounded open domain of $ \mathbb{R}^n $ and $ f $ is a continuous bounded function. Assume that $ \inf_{\partial \Omega} \left| \dfrac{\partial u}{\partial \nu} \right| >0 $.

Then
\begin{equation}\label{ThmNormalDerABPfullyNonlOmegaStatementEq}
\inf_{\partial \Omega} \left| \dfrac{\partial u}{\partial \nu} \right| \leq C \left( \int_{\Omega \cap \Gamma_u} (f^+)^n \right)^{1/n}
\end{equation}
where $ C $ depends only on $ n \;,\: \vartheta \;,\: \varTheta $.
\end{theorem}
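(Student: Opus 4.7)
The plan is to mirror the proof of Theorem \ref{ThmNormalDerABPfullyNonlinear} verbatim, with $B_r$ replaced by the general domain $\Omega$. This works because the ball geometry is never actually exploited once Lemma \ref{Lemma1} has been invoked: the constant in Theorem \ref{ThmNormalDerABPfullyNonlinear} is independent of $r$, so no domain-specific quantity needs to be controlled.

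Set $m:=\inf_{\partial\Omega}\bigl|\partial u/\partial\nu\bigr|>0$. Without loss of generality assume $\partial u/\partial\nu>0$ on $\partial\Omega$; otherwise one replaces $\Gamma_u$ by $\Gamma^u$ and applies the corresponding statement of Lemma \ref{Lemma1}. Lemma \ref{Lemma1} then provides the inclusion $B_m(0)\subset\nabla u(\Gamma_u)$, so that
\begin{equation*}
m^n|B_1| = |B_m(0)| \leq |\nabla u(\Gamma_u)|.
\end{equation*}

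Next, let $\tilde{\Gamma}u$ denote the convex envelope of $-u^{-}$ in a ball containing $\overline{\Omega}$, as in Definition 3.1 of \cite{CC}. Since $u\in C^1(\overline{\Omega})$, the envelope agrees with $u$ on $\Gamma_u$, and $\nabla\tilde{\Gamma}u=\nabla u$ there. Hence $\nabla u(\Gamma_u)=\nabla\tilde{\Gamma}u(\Gamma_u)$. Applying the area formula to the Lipschitz map $\nabla\tilde{\Gamma}u$, and using that the Alexandrov Hessian $D^2\tilde{\Gamma}u$ exists a.e.\ and is positive semidefinite,
\begin{equation*}
|\nabla u(\Gamma_u)| \leq \int_{\Gamma_u}\det(D^2\tilde{\Gamma}u)\,dx.
\end{equation*}
The decisive step, which is exactly Lemma 3.5 of \cite{CC}, is the pointwise bound on $\Gamma_u$
\begin{equation*}
\det(D^2\tilde{\Gamma}u(x)) \leq C(n,\vartheta,\varTheta)\,(f^+(x))^n,
\end{equation*}
obtained from the viscosity inequality $\mathcal{M}^-(D^2 u,\vartheta,\varTheta)\leq f$, transferred to $D^2\tilde{\Gamma}u$ at points of twice-differentiability, followed by the arithmetic-geometric mean inequality applied to the eigenvalues. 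Combining the three displays and taking $n$-th roots gives \eqref{ThmNormalDerABPfullyNonlOmegaStatementEq}.

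The only genuinely delicate point is the viscosity-to-Alexandrov transfer used in the pointwise determinant bound; however, this is entirely standard and covered by the machinery of Chapter 3 of \cite{CC}, which operates on arbitrary open sets and in particular does not use that the domain is a ball. The novelty here is purely the substitution of $\inf_{\partial\Omega}|\partial u/\partial\nu|$ for the quantity $r^{-1}\sup_{B_r}u^-$ used in \cite{CC}, which is supplied by Lemma \ref{Lemma1}.
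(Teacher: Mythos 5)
Your overall plan is sound and, once unpacked, is essentially the argument the paper is pointing to when it cites Theorem~3.6 of \cite{CC}: Lemma~\ref{Lemma1} supplies the inclusion $B_m(0)\subset\nabla u(\Gamma_u)$, and the remaining work is the measure estimate and the pointwise determinant bound on the contact set, which come from the convex-envelope machinery of \cite{CC} (Lemmas~3.3--3.5) and do not exploit the geometry of a ball beyond needing a convex set over which to form the envelope. So the proposal is not really a different route; it is the paper's one-line reference made explicit.

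There is, however, a concrete error in which envelope you form. You take $\tilde{\Gamma}u$ to be the convex envelope of $-u^{-}$, citing Definition~3.1 of \cite{CC}. That construction is tailored to the standard ABP setting where $u\geq 0$ on $\partial\Omega$ and one extends $u$ by zero outside $\Omega$; here $u$ is not assumed nonnegative on $\partial\Omega$, so that extension is not even continuous. Worse, the envelope of $-u^{-}=\min(u,0)$ is $\leq 0$ everywhere and its admissible supporting planes are all nonpositive, so it cannot agree with $u$ on $\Gamma_u$ in general (indeed in the Dirichlet eigenvalue applications $u$ will be positive on $\Gamma_u$), and even where $u\leq 0$ the supporting plane $L_x(y)=u(x)+\nabla u(x)\cdot(y-x)$ with $x\in\Gamma_u$ need not stay below $\min(u,0)$. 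So the assertion that ``the envelope agrees with $u$ on $\Gamma_u$, and $\nabla\tilde{\Gamma}u=\nabla u$ there'' is false for the object you defined. What you need, and what the paper means when it writes ``the convex envelope of $u$'' in the proof of Theorem~\ref{ThmNormalDerABPfullyNonlinear}, is
\begin{equation*}
\tilde{\Gamma}u(x) := \sup\bigl\{ L(x) \,:\, L\text{ affine},\ L\leq u\text{ on }\overline{\Omega}\bigr\},
\end{equation*}
formed over the convex hull of $\overline{\Omega}$ (or over a ball containing $\overline{\Omega}$, with $u$ understood as $+\infty$ outside $\overline\Omega$). With this choice, the definition \eqref{LowerContactSetΓ_u} of $\Gamma_u$ says precisely that $L_x$ is admissible and touches $u$ at $x$, so $\tilde{\Gamma}u=u$ and $\nabla\tilde{\Gamma}u=\nabla u$ on $\Gamma_u$; the $C^{1,1}$ regularity and the bound $\det(D^2\tilde{\Gamma}u)\leq C(n,\vartheta,\varTheta)(f^+)^n$ at contact points then follow as in \cite{CC}, and your chain of displays closes as intended. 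Replace $-u^{-}$ with $u$ in your definition of $\tilde{\Gamma}u$ and the proof is correct.
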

$ \\ $
\begin{proof}
The proof is similar to the proof of Theorem 3.6 in \cite{CC}.
\end{proof}

$ \\ $

Now, observe that Proposition 2.13 in \cite{CC} allow us to generalize the above estimate to any fully nonlinear elliptic equation.

\begin{corollary}\label{CorNormalDerABPfullyNonl}
Let $ u \in C^1( \overline{\Omega}) $ be a viscosity supersolution of
\begin{equation}\label{CorNormalDerABPfullyNonlEq}
F(\nabla^2 u) = f(x)
\end{equation}
where $ F $ is elliptic with constants $ \vartheta \:, \; \varTheta \;,\: F(0)=0 $ and $ f $ is a continuous bounded function. Assume that $ \inf_{\partial \Omega} \left| \dfrac{\partial u}{\partial \nu} \right| >0 $.

Then
\begin{equation}\label{CorNormalDerABPfullyNonlStatement}
\inf_{\partial \Omega} \left| \dfrac{\partial u}{\partial \nu} \right| \leq C \left( \int_{\Omega \cap \Gamma_u} (f^+)^n \right)^{1/n}
\end{equation}
where $ C $ depends only on $ n \;,\: \vartheta \;,\: \varTheta $.
\end{corollary}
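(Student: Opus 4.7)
The plan is to reduce the claim to Theorem \ref{ThmNormalDerABPfullyNonlinearOmega} by passing from the general elliptic operator $F$ to the Pucci extremal operator $\mathcal{M}^-$. The algebraic ingredient is the pointwise comparison that holds for every symmetric matrix $M$ whenever $F$ is uniformly elliptic with constants $\vartheta,\varTheta$ and $F(0)=0$, namely
\[
\mathcal{M}^-(M,\vartheta,\varTheta) \leq F(M) \leq \mathcal{M}^+(M,\vartheta,\varTheta).
\]
Indeed, writing $F(M)=F(M)-F(0)$ and using uniform ellipticity one controls the incremental quotient along any path of symmetric matrices from $0$ to $M$ by the extremal operators applied to the increments; summing along the path gives the displayed bounds.

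Next I would invoke Proposition 2.13 in \cite{CC}, which is precisely the statement that the above pointwise comparison lifts to the viscosity level: if $u$ is a viscosity supersolution of $F(\nabla^2 u)=f(x)$, then $u$ is also a viscosity supersolution of $\mathcal{M}^-(\nabla^2 u,\vartheta,\varTheta)=f(x)$. In the notation of Definition \ref{SpaceOfViscositySol}, this means
\[
u \in \mathscr{M}^-(\vartheta,\varTheta,f)\cap C^1(\overline{\Omega}).
\]
The hypothesis $F(0)=0$ is needed exactly to ensure that the right-hand side is unchanged after replacing $F$ by $\mathcal{M}^-$; no additive constant appears.

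With $u \in \mathscr{M}^-(\vartheta,\varTheta,f)\cap C^1(\overline{\Omega})$ and $\inf_{\partial\Omega}|\partial u/\partial\nu|>0$ in hand, Theorem \ref{ThmNormalDerABPfullyNonlinearOmega} applies directly and yields
\[
\inf_{\partial\Omega}\left|\frac{\partial u}{\partial\nu}\right| \leq C\left(\int_{\Omega\cap\Gamma_u}(f^+)^n\right)^{1/n},
\]
with a constant $C=C(n,\vartheta,\varTheta)$, which is exactly the conclusion of the corollary.

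There is essentially no obstacle beyond correctly citing Proposition 2.13 in \cite{CC}; the corollary is a transfer result, and the only nontrivial work (the ABP-type bound for the Pucci operator) has already been carried out in Theorem \ref{ThmNormalDerABPfullyNonlinearOmega}. The mildest subtlety is the viscosity-level passage from $F$ to $\mathcal{M}^-$, which is what Proposition 2.13 handles, so I would simply quote it rather than redo the test-function argument.
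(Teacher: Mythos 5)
Your proposal is correct and is essentially the same argument as the paper's: the paper's proof of this corollary is a one-line reduction to Theorem \ref{ThmNormalDerABPfullyNonlinearOmega} via Proposition 2.13 of \cite{CC}, which is precisely the viscosity-level transfer from $F$ to $\mathcal{M}^-$ that you describe. Your added explanation of the pointwise comparison $\mathcal{M}^-(M)\le F(M)\le\mathcal{M}^+(M)$ and its lift to test functions is accurate and merely spells out what the cited proposition encodes.
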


\begin{proof}
The proof is a direct consequence of Theorem \ref{ThmNormalDerABPfullyNonlinearOmega} and Proposition 2.13 in \cite{CC}.
\end{proof}
$ \\ $

\subsection{An $ L^{\infty} $ bound for the gradient}

\begin{theorem}\label{ThmWeakHarnackIn}
Let $ u $ be a smooth solution of
\begin{equation}\label{ThmWeakHarnFulNonEq}
\begin{cases}
F( \nabla^2 u) =f(u) \;\: \textrm{in} \;\: B_R \\
\;\;\; u =0 \;,\: \textrm{on} \;\: \partial B_R
\end{cases}
\end{equation}
where $ F $ is elliptic with constants $ \vartheta \:, \; \varTheta \;,\: F(0)=0 \;\:,\;  f: \mathbb{R} \rightarrow \mathbb{R} $.

Then
\begin{equation}\label{ThmWeakHarnStatement}
\left( \frac{1}{| B_R |} \int_{B_R} | \nabla u |^{2 \varepsilon} dx \right)^{\varepsilon} \leq C \left[ \left( \int_{B_R \cap \Gamma_u} (f^+(u))^n dx \right)^{1/n} +2 \varTheta R || \: | \nabla^2 u| \: ||^2_{L^n(B_{2R})} \right] 
\end{equation}
where $ C=C(n, \vartheta, \varTheta, \tilde{b} R^2) \;,\: \tilde{b} $ is such that $ | f'(u) | \leq \frac{\tilde{b}}{2} $ and $ \varepsilon>0 $ small.
\end{theorem}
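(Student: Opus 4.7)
The plan follows the scheme of Theorem \ref{ThmLinftyBoundForGradient}: set $P := |\nabla u|^2$, derive a Pucci-type differential inequality for $P$, apply the Krylov--Safonov weak Harnack inequality (Theorem 4.8 of \cite{CC}), and then control the infimum that appears on the right-hand side via Theorem \ref{ThmNormalDerABPfullyNonlinear}, using that $u \equiv 0$ on $\partial B_R$.

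Differentiating $F(\nabla^2 u) = f(u)$ in the variable $x_k$ yields, in Einstein summation notation,
\begin{equation*}
F_{ij}(\nabla^2 u)\, u_{ijk} = f'(u)\, u_k,
\end{equation*}
where $(F_{ij}) := \partial F/\partial M$ is a symmetric matrix with eigenvalues in $[\vartheta, \varTheta]$ by uniform ellipticity. A direct computation then gives
\begin{equation*}
F_{ij}(\nabla^2 u)\, P_{ij} = 2\, F_{ij}\, u_{ki}\, u_{kj} + 2 f'(u)\, P.
\end{equation*}
Since the matrix $u_{ki} u_{kj} = [(\nabla^2 u)^2]_{ij}$ is positive semi-definite and $(F_{ij})$ has eigenvalues in $[\vartheta, \varTheta]$, we obtain $F_{ij} u_{ki} u_{kj} \leq \varTheta |\nabla^2 u|^2$; combined with $|f'(u)| \leq \tilde b / 2$, this gives
\begin{equation*}
\mathcal{M}^-(\nabla^2 P) \leq F_{ij}(\nabla^2 u)\, P_{ij} \leq 2\varTheta\, |\nabla^2 u|^2 + \tilde b\, P,
\end{equation*}
so that $P \in \mathscr{M}^-(\vartheta, \varTheta,\, 2\varTheta |\nabla^2 u|^2 + \tilde b P)$ with $P \geq 0$.

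I would then apply the weak Harnack inequality for the class $\mathscr{M}^-$ (Theorem 4.8 of \cite{CC}), with the lower-order coefficient $\tilde b$ absorbed into the constant in the standard way, to obtain a universal $\varepsilon > 0$ and $C = C(n, \vartheta, \varTheta, \tilde b R^2)$ with
\begin{equation*}
\Bigl( \tfrac{1}{|B_R|} \textstyle\int_{B_R} |\nabla u|^{2\varepsilon} \Bigr)^{1/\varepsilon} \leq C\Bigl( \inf_{B_R} |\nabla u|^2 + 2\varTheta R\, \bigl\| \, |\nabla^2 u|^2\, \bigr\|_{L^n(B_{2R})}\Bigr).
\end{equation*}
Since $u \equiv 0$ on $\partial B_R$ one has $\nabla u \parallel \nu$ on $\partial B_R$, and hence $\inf_{B_R}|\nabla u|^2 \leq \inf_{\partial B_R}|\partial_\nu u|^2$; because $F(0) = 0$ and $F$ is uniformly elliptic, Proposition 2.13 of \cite{CC} places $u$ in $\mathscr{M}^-(\vartheta, \varTheta, f(u))$, so Theorem \ref{ThmNormalDerABPfullyNonlinear} furnishes $\inf_{\partial B_R}|\partial_\nu u| \leq C \bigl(\int_{B_R \cap \Gamma_u} (f^+(u))^n\bigr)^{1/n}$. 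Combining the three inequalities produces \eqref{ThmWeakHarnStatement}.

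The principal obstacle is technical rather than conceptual: the weak Harnack inequality is ordinarily stated under a Pucci hypothesis on the larger ball $B_{2R}$, while $P$ is defined only on $B_R$. One must therefore either extend $u$ across $\partial B_R$ (for instance by zero, using $u|_{\partial B_R} = 0$) and verify that the Pucci-class membership survives the extension, or run the weak Harnack on nested interior balls and patch the resulting estimate with the boundary control from Theorem \ref{ThmNormalDerABPfullyNonlinear}. A secondary, more routine technicality is justifying the absorption of the lower-order term $\tilde b P$ into the constant, which is where the dependence $C = C(\tilde b R^2)$ in the final estimate originates.
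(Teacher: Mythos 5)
Your proposal follows essentially the same route as the paper: derive the $P=|\nabla u|^2$ differential inequality (which you compute directly, whereas the paper simply cites Lemma~2.4 of \cite{DG} for $\sum_{i,j} d_{ij} P_{x_i x_j} - 2f'(u) P \leq 2\varTheta |\nabla^2 u|^2$), apply the Krylov--Safonov weak Harnack inequality to $P$, and then control $\inf_{B_R}|\nabla u|^2$ by the boundary normal derivative via Theorem~\ref{ThmNormalDerABPfullyNonlinear} (paper uses Corollary~\ref{CorNormalDerABPfullyNonl}) together with $\nabla u \parallel \nu$ on $\partial B_R$. The only difference is that you explicitly flag the $B_{2R}$ versus $B_R$ domain-of-validity issue for the weak Harnack hypothesis, which the paper passes over in silence, and you also write the $L^\varepsilon$ average with the exponent $1/\varepsilon$ rather than the paper's $\varepsilon$ (the latter appears to be a typographical slip in the paper, as the standard weak Harnack statement gives $1/\varepsilon$).
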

$ \\ $
\begin{proof}
Arguing as in Lemma 2.4 in \cite{DG}, we have that
\begin{equation}\label{PfWeakHarnackEq1}
\sum_{i,j} d_{ij} P_{x_i x_j} -2f'(u) P \leq 2 \varTheta | \nabla^2 u |^2
\end{equation}
where $ P = | \nabla u |^2 $ and $ d_{ij} = \dfrac{\partial F}{\partial a_{ij}} $.

Then by the weak Harnack inequality (see Theorem 4.3 in \cite{Cabre4})
\begin{equation}\label{PfWeakHarnackEq2}
\left( \frac{1}{| B_R |} \int_{B_R} | \nabla u |^{2 \varepsilon} dx \right)^{\varepsilon} \leq C \left[ \inf_{B_R}| \nabla u |^2 +2 \varTheta R || \: | \nabla^2 u| \: ||^2_{L^n(B_{2R})} \right] 
\end{equation}

Now, by Corollary \ref{CorNormalDerABPfullyNonl} it holds,
\begin{equation}\label{PfWeakHarnackEq3}
\inf_{\partial \Omega} \left| \dfrac{\partial u}{\partial \nu} \right| \leq C_1 \left( \int_{B_R \cap \Gamma_u} (f^+)^n dx \right)^{1/n}
\end{equation}
where $ C_1=C_1(n,\vartheta, \varTheta) $.

Finally, since, $ u $ vanish on $ \partial B_R $, $ \left| \dfrac{\partial u}{\partial \nu} \right| = | \nabla u | $ on $ \partial B_R $ and we conclude \eqref{ThmWeakHarnStatement}.
\end{proof}

$ \\ $

\begin{theorem}\label{ThmL^inftyBoundonGradFullyNonlinear}
Let $ u $ be a smooth function that satisfies \eqref{ThmWeakHarnFulNonEq} and $ F \;,\: f $ as in Theorem \ref{ThmWeakHarnackIn} and assume that $ f'(u) \geq 0 $.

Then
\begin{equation}\label{ThmL^inftyBoundonGradStatement}
\sup_{B_R} | \nabla u |^2 \leq C \left\lbrace \left( \int_{B_R \cap \Gamma_u} (f^+)^n dx \right)^{1/n} +2 \varTheta R || \: | \nabla^2 u| \: ||^2_{L^n(B_{2R})} \right\rbrace
\end{equation}
where $ C=C(n, \vartheta, \varTheta, \tilde{b} R^2) $ and $ \tilde{b} $ is such that $ | f'(u)| \leq \frac{\tilde{b}}{2} $.
\end{theorem}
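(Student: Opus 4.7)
The plan is to upgrade the integral bound of Theorem \ref{ThmWeakHarnackIn} to an $L^{\infty}$ bound by invoking a local maximum principle for subsolutions of a linear uniformly elliptic equation in non-divergence form. The crucial role of the extra hypothesis $f'(u) \geq 0$ is that it forces the zero-order coefficient in the linearized inequality for $P := |\nabla u|^2$ to have the ``good'' sign required by the Krylov--Safonov machinery, so that the Harnack--to--sup step does not cost us an $\inf$ term.

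Concretely, the proof of Theorem \ref{ThmWeakHarnackIn} (through Lemma 2.4 of \cite{DG}) already established the pointwise inequality
\[
\sum_{i,j} d_{ij}(x) P_{x_i x_j} - 2 f'(u) P \leq 2 \varTheta |\nabla^2 u|^2 \quad \textrm{in} \;\: B_R,
\]
with $d_{ij}(x) = (\partial F/\partial a_{ij})(\nabla^2 u(x))$ a bounded measurable matrix whose eigenvalues lie in $[\vartheta, \varTheta]$. Under the new hypothesis $f'(u)\geq 0$, the coefficient $c(x):=-2f'(u(x))$ is nonpositive, and so the nonnegative function $P$ is a subsolution of a linear uniformly elliptic equation in non-divergence form with the sign condition $c \leq 0$. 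I would then apply the local maximum principle for such subsolutions (for instance Theorem 4.8 of \cite{Cabre4}), which is valid for any exponent $p>0$, taking $p = 2\varepsilon$ with the same small $\varepsilon$ that appears in the weak Harnack step of Theorem \ref{ThmWeakHarnackIn}. This produces an estimate of the schematic form
\[
\sup_{B_R} |\nabla u|^2 \leq C \left[ \left( \frac{1}{|B_{2R}|} \int_{B_{2R}} |\nabla u|^{2\varepsilon} dx \right)^{\frac{1}{2\varepsilon}} + R \: || \: 2\varTheta|\nabla^2 u|^2 \: ||_{L^n(B_{2R})} \right],
\]
with a constant $C$ that depends only on $n, \vartheta, \varTheta$ and on $R^2 ||f'(u)||_{L^{\infty}(B_R)}$, exactly as demanded by the statement.

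To conclude, I substitute the bound produced by Theorem \ref{ThmWeakHarnackIn} for the $L^{2\varepsilon}$ average of $|\nabla u|^2$ into the first term on the right-hand side, absorb universal constants, and obtain \eqref{ThmL^inftyBoundonGradStatement}. The step I expect to require the most care is purely bookkeeping: aligning the exponent $2\varepsilon$ used in the local maximum principle with the exponent produced by the weak Harnack step of Theorem \ref{ThmWeakHarnackIn}, and verifying that the powers of $R$ coming from scaling in the local maximum principle combine correctly with the factor $R$ on the forcing term in \eqref{ThmL^inftyBoundonGradStatement}. Beyond this, no new analytic ingredients are needed: the full bound is a clean combination of the gradient ABP estimate of Corollary \ref{CorNormalDerABPfullyNonl}, the Krylov--Safonov theory as formulated in \cite{CC} and \cite{Cabre4}, and the sign observation $f'(u)\geq 0$.
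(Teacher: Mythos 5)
Your proposal follows essentially the same route as the paper's own proof. The paper invokes Theorem 5.1 of \cite{DG}, which is the packaged version of the very step you describe: a local maximum principle for the subsolution $P = |\nabla u|^2$, upgraded from the $L^{2\varepsilon}$ control supplied by Theorem \ref{ThmWeakHarnackIn} to an $L^\infty$ bound. You instead cite the corresponding general local maximum principle from \cite{Cabre4}; the two are interchangeable, and your version has the merit of making explicit where the sign hypothesis $f'(u)\ge 0$ enters — it makes the zero-order coefficient $c = -2f'(u)$ nonpositive so that the $\sup$-bound for the subsolution $P$ applies — which the paper's one-line proof leaves buried in the citation. In that sense your write-up is a useful unpacking rather than a genuinely different argument.

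One bookkeeping correction, which you anticipated needing: if you apply the local maximum principle to $P=|\nabla u|^2$ with exponent $p=\varepsilon$, the right-hand side carries the quantity
\begin{equation*}
\left( \frac{1}{|B_{2R}|}\int_{B_{2R}} P^{\varepsilon}\, dx\right)^{1/\varepsilon}
\;=\; \left( \frac{1}{|B_{2R}|}\int_{B_{2R}} |\nabla u|^{2\varepsilon}\, dx\right)^{1/\varepsilon},
\end{equation*}
not $(\cdot)^{1/(2\varepsilon)}$; taking $p=2\varepsilon$ as you wrote would instead produce $\int |\nabla u|^{4\varepsilon}$ inside, which is not what Theorem \ref{ThmWeakHarnackIn} controls. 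Choosing $p=\varepsilon$ aligns exactly with the weak Harnack step, after which substitution gives \eqref{ThmL^inftyBoundonGradStatement} up to absorbing constants into $C(n,\vartheta,\varTheta,\tilde b R^2)$.
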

$ \\ $
\begin{proof}
By Theorem 5.1 in \cite{DG} we have
\begin{equation}\label{PfThmL^inftyGradFulNonlEq1}
\sup_{B_R} | \nabla u |^2 \leq \frac{C}{| B_{2R} |^{1/ p}} || \nabla u ||^2_{L^{2p}(B_R)}
\end{equation}
for $ p>0 $ small.

Therefore we take $ p=\varepsilon $ in Theorem \ref{ThmWeakHarnackIn} and we conclude.
\end{proof}

So, as in section 2, we have the following $ C^3 $ estimate as an application of Theorem \ref{ThmL^inftyBoundonGradFullyNonlinear}.

\begin{corollary}\label{CorC3EstimateFullyNonlinear}
Under the assumptions of Theorem \ref{ThmL^inftyBoundonGradFullyNonlinear}, if in addition $ f'(u) \;,\: | \nabla u| $ do not vanish in $ \overline{B}_R $ and  $ | \Delta u_{x_k} | \leq C_0 \;,\: k=1,..,n $, then
\begin{equation}\label{CorC3EstimateFullyNonlinearEquation}
\sup_{B_R} \sum_k | \Delta u_{x_k} |^2 \leq C \left\lbrace \left( \int_{B_R \cap \Gamma_u} (f^+)^n dx \right)^{1/n} +2 \varTheta R || \: | \nabla^2 u| \: ||^2_{L^n(B_{2R})} \right\rbrace
\end{equation}
where $ C=C(n, \vartheta, \varTheta, \tilde{b} R^2) $ and $ \tilde{b} $ is such that $ | f'(u) | \leq \frac{\tilde{b}}{2} $.
\end{corollary}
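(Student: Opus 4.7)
The plan is to mirror the argument for Corollary \ref{CorollaryC3estimate} in the linear case. There, one differentiates $\Delta u = f(u)$ to obtain the pointwise identity $\Delta u_{x_i} = f'(u) u_{x_i}$, which instantly reduces the $C^3$ bound to the $L^\infty$ gradient bound. In the fully nonlinear setting the commutator between $\partial_{x_k}$ and the equation is less clean, and the additional hypotheses $f'(u) \neq 0$, $|\nabla u| \neq 0$ on $\overline{B}_R$, and $|\Delta u_{x_k}| \le C_0$ are precisely what is needed to compensate.

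First I would differentiate $F(\nabla^2 u) = f(u)$ with respect to $x_k$: setting $d_{ij} := (\partial F/\partial a_{ij})(\nabla^2 u)$ and $w_k := u_{x_k}$, we obtain the linear elliptic identity
\[
\sum_{i,j} d_{ij}\, (w_k)_{x_i x_j} = f'(u)\, w_k, \qquad k=1,\dots,n.
\]
The uniform ellipticity of $F$ with constants $\vartheta, \varTheta$ yields $\vartheta I \le (d_{ij}) \le \varTheta I$, so by the definition of Pucci's extremal operators
\[
\mathcal{M}^-(\nabla^2 w_k) \le f'(u)\, w_k \le \mathcal{M}^+(\nabla^2 w_k).
\]

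From these bounds, together with the assumption $|\Delta w_k| \le C_0$ and with $f'(u), |\nabla u|$ not vanishing on $\overline{B}_R$, I would extract a pointwise estimate of the form
\[
|\Delta u_{x_k}|^2 \le \widetilde C\, |u_{x_k}|^2,
\]
where $\widetilde C$ depends only on $n, \vartheta, \varTheta, \|f'(u)\|_{L^\infty(B_R)}$; this is the nonlinear analogue of the identity $|\Delta u_{x_i}| \le \|f'(u)\|_\infty |u_{x_i}|$ used in the semilinear case. Summing over $k$ gives $\sum_k |\Delta u_{x_k}|^2 \le n\widetilde C\, |\nabla u|^2$ pointwise on $B_R$, and taking $\sup_{B_R}$ and invoking Theorem \ref{ThmL^inftyBoundonGradFullyNonlinear} then closes the estimate.

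The hard part is the pointwise step: the linearized equation $\sum d_{ij}(w_k)_{ij} = f'(u) w_k$ controls only the Pucci bounds on $\nabla^2 w_k$, not the trace $\Delta w_k$ directly. The three extra hypotheses enter essentially here — the non-vanishing of $f'(u)$ makes the linearized equation nondegenerate as a constraint on $w_k$; the non-vanishing of $|\nabla u|$ prevents blow-up of the ratio $|\Delta w_k|^2/|w_k|^2$ on $\overline{B}_R$; and the a priori bound $|\Delta u_{x_k}| \le C_0$ controls the positive and negative parts of the eigenvalues of $\nabla^2 w_k$ jointly, so that the Pucci inequalities can be converted into the desired bound on $|\Delta w_k|$ with $\widetilde C$ independent of $C_0$ (which, consistent with the statement, does not appear in the final constant).
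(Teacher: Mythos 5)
The step where your proposal breaks down is precisely the one you flag as ``the hard part,'' and the flagged difficulties are real: the Pucci bounds
\[
\mathcal{M}^-(\nabla^2 w_k) \le f'(u)\,w_k \le \mathcal{M}^+(\nabla^2 w_k)
\]
do \emph{not} control $\Delta w_k$. Already for $(d_{ij})=\operatorname{diag}(\vartheta,\varTheta)$ in $\mathbb{R}^2$ one can take $(w_k)_{11}=M$ and $(w_k)_{22}=-\tfrac{\vartheta}{\varTheta}M+\tfrac{g}{\varTheta}$ so that $\sum d_{ij}(w_k)_{x_i x_j}=g$ stays bounded while $\Delta w_k=(1-\vartheta/\varTheta)M+g/\varTheta\to\infty$. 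So no argument that only uses the ellipticity matrix at each point can turn the linearized identity $\sum d_{ij}(w_k)_{x_ix_j}=f'(u)w_k$ into a pointwise bound $|\Delta w_k|\le \widetilde C\,|w_k|$; the extra hypotheses must enter in a genuinely different way than ``controlling the eigenvalues jointly.''

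The paper's mechanism is a compactness–contradiction argument, which you do not reproduce. One assumes that for every $\delta\in(0,\tfrac12)$ there is $x_\delta\in B_R$ with $|\sum F_{a_{ij}}(\nabla^2 u)\,u_{x_ix_jx_k}|<\delta\,|\Delta u_{x_k}|$ at $x_\delta$, extracts $\delta_m\to 0$ and $x_m\in B_R$, and uses the a priori bound $|\Delta u_{x_k}|\le C_0$ to force $|\sum F_{a_{ij}}u_{x_ix_jx_k}(x_m)|\le\delta_m C_0\to 0$, hence $f'(u(x_m))u_{x_k}(x_m)\to 0$ by the differentiated equation. Passing to a convergent subsequence $x_m\to x_0\in\overline{B}_R$ and using continuity contradicts the non-vanishing of $f'(u)$ and $|\nabla u|$ on $\overline{B}_R$. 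This yields a uniform $\delta>0$ with $|\sum F_{a_{ij}}u_{x_ix_jx_k}|\ge\delta|\Delta u_{x_k}|$ on $B_R$, whence $\delta|\Delta u_{x_k}|\le |f'(u)||u_{x_k}|\le\tfrac{\tilde b}{2}|u_{x_k}|$, and Theorem~\ref{ThmL^inftyBoundonGradFullyNonlinear} closes the estimate. In particular $C_0$ enters only to make the $\delta_m|\Delta u_{x_k}(x_m)|\to 0$ step go through, and each non-vanishing hypothesis kills the corresponding degenerate limit — concretely different from the roles you assigned them. If you want to rescue your write-up, replace the phrase ``extract a pointwise estimate'' with this compactness argument; as written, the proposal asserts what needs to be proved.
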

\begin{proof}
Differentiating \eqref{ThmWeakHarnFulNonEq}, we have
\begin{equation}\label{CorC3EstProofEq1}
\sum_{i,j} F_{a_{ij}}( \nabla^2 u) u_{x_i x_j x_k} = f'(u) u_{x_k} \;\:,\; k=1,...,n.
\end{equation}
Suppose for contradiction that for any number $ 0< \delta < \frac{1}{2} $, there exists $ x_0 \in B_R $ such that
\begin{equation}\label{CorC3EstProofEq2}
| \sum_{i,j} F_{a_{ij}}( \nabla^2 u(x_0)) u_{x_i x_j x_k}(x_0) | < \delta | \Delta u_{x_k}(x_0) |
\end{equation}
then we can deduce a sequence of numbers $ \delta_m \rightarrow 0 $ and a sequence of $ x_m \in B_R $ such that 
\begin{equation}\label{CorC3EstProofEq3}
| \sum_{i,j} F_{a_{ij}}( \nabla^2 u(x_m)) u_{x_i x_j x_k}(x_m) | < \delta_m | \Delta u_{x_k} (x_m)| \leq \delta_m C_0 \rightarrow 0
\end{equation}
and by \eqref{CorC3EstProofEq1},
\begin{equation}\label{CorC3EstProofEq4}
|f'(u(x_m)) u_{x_k}(x_m)| \rightarrow 0
\end{equation}
but $ x_m \rightarrow x_0 \in \overline{B}_R $ up to subsequence, thus
\begin{equation}\label{CorC3EstProofEq5}
\begin{gathered}
f'(u(x_m)) \rightarrow f'(u(x_0)) \;\; \textrm{and} \; u_{x_k}(x_m) \rightarrow u_{x_k}(x_0) \\
\textrm{up to subsequence}.
\end{gathered}
\end{equation}
so, $ f'(u(x_0)) =0 $ or $ u_{x_k}(x_0) =0 $ which contradicts the assumption on $ f' \;,\: | \nabla u| $.

Therefore there exist $ \delta >0 $ such that
\begin{equation}\label{CorC3EstProofEq6}
| \sum_{i,j} F_{a_{ij}}( \nabla^2 u) u_{x_i x_j x_k}| \geq \delta | \Delta u_{x_k}| \;\;,\; \textrm{uniformly in} \;\: B_R
\end{equation}
and utilizing \eqref{CorC3EstProofEq1} we get
\begin{equation}\label{CorC3EstProofEq6}
\delta^2 \sup_{B_R} \sum_k | \Delta u_{x_k}|^2 \leq \frac{\tilde{b}}{2} \sup_{B_R} | \nabla u |^2
\end{equation}
and we conclude by Theorem \ref{ThmL^inftyBoundonGradFullyNonlinear}.
\end{proof}

Note that in Corollary \ref{CorC3EstimateFullyNonlinear}, the assumption that $ f'(u) \;,\: | \nabla u| $ do not vanish in $ \overline{B}_R $, it suffices to be satisfied in the closure of the set of points where \eqref{CorC3EstProofEq2} is satisfied. So in the case where $ F( \nabla^2 u) = \Delta u $, this set is empty, and also $ \Delta u_{x_k} $ is bounded. In this view, Corollary \ref{CorC3EstimateFullyNonlinear} is the generalization of Corollary \ref{CorollaryC3estimate} for fully nonlinear elliptic equations.
$ \\ $

As in the case of Theorem \ref{ThmLinftyBoundForGradient} we could ask whether the estimates in Theorems \ref{ThmWeakHarnackIn} and \ref{ThmL^inftyBoundonGradFullyNonlinear} hold without a priori assume that $ u $ vanish on $ \partial B_R $.

$ \\ $

\subsection{Applications to Eigenvalue problems}

In this last subsection we consider the Dirichlet and the Robin eigenvalue problems for fully nonlinear equations and apply Theorem \ref{ThmNormalDerABPfullyNonlinear} to establish eigenvalue inequalities similar to the ones in the previous sections.

Let $ \Omega $ be a bounded open domain of $ \mathbb{R}^n $, $ p \geq 1 $ and consider the Dirichlet eigenvalue problem
\begin{equation}\label{DirichletEigenvalueFullyNonlinear}
\begin{cases}
\mathcal{M}^- ( \nabla^2 u, \vartheta, \varTheta) = \lambda | u |^p \;\;,\; \textrm{in} \;\: \Omega \\ \;\;\;\;\;\;\;\;\;\;\;\;\; u = 0 \;\;\;\;\;\;\;\;\;\;\:\;\;\;\;\;,\; \textrm{on} \;\: \partial \Omega
\end{cases}
\end{equation}
and the Robin eigenvalue problem
\begin{equation}\label{RobinEigenvalueFullyNonlinear}
\begin{cases}
\mathcal{M}^- ( \nabla^2 u, \vartheta, \varTheta) = \lambda | u |^p \;\;,\; \textrm{in} \;\: \Omega \\ \;\;\;\;\;\;\;\;\; \frac{\partial u}{\partial \nu} + \alpha u = 0 \;\;\;\;\:\;\;\;\;\;,\; \textrm{on} \;\: \partial \Omega
\end{cases}
\end{equation}
respectively.

We could also replace $ \lambda | u |^p $ by $ \lambda u $, so that in the case where $ \vartheta = \varTheta =1 $ the operator $ \mathcal{M}^- $ becomes the Laplace operator and we have the eigenvalue problem of the Laplace operator as a special case.

The Corollaries below give some bounds for the problems \eqref{DirichletEigenvalueFullyNonlinear} and \eqref{DirichletEigenvalueFullyNonlinear} respectively.
$ \\ $

\begin{corollary}\label{CorDirichletEigenFullyNonlinear}
Let $ u \in \mathscr{M}^- ( \vartheta, \varTheta, \lambda | u |^p ) \cap C^1( \overline{\Omega}) $ be a viscosity supersolution of \eqref{DirichletEigenvalueFullyNonlinear} and assume that $ \inf_{\partial \Omega} \left| \dfrac{\partial u}{\partial \nu} \right| >0 $.

Then
\begin{equation}\label{CorDirichletEigenFullyNonlStatementEq}
\lambda \geq \frac{ \inf_{\partial \Omega} \left| \dfrac{\partial u}{\partial \nu} \right| }{C || u^p ||_{L^n(\Omega \cap \Gamma_u)}}
\end{equation}
\end{corollary}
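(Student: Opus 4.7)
The plan is to apply Theorem \ref{ThmNormalDerABPfullyNonlinearOmega} directly with the choice $f(x) = \lambda |u(x)|^p$. Since $u$ is assumed to be a viscosity supersolution of \eqref{DirichletEigenvalueFullyNonlinear}, by Definition \ref{SpaceOfViscositySol} we have $u \in \mathscr{M}^-(\vartheta,\varTheta,\lambda|u|^p)$, which is exactly the hypothesis required in Theorem \ref{ThmNormalDerABPfullyNonlinearOmega}. The continuity and boundedness of the right-hand side $f = \lambda |u|^p$ follow immediately from the fact that $u \in C^1(\overline{\Omega})$ together with the compactness of $\overline{\Omega}$, so the theorem applies.

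With this choice of $f$, since $\lambda \geq 0$ (otherwise there is nothing to prove, as the bound is trivially satisfied) and $|u|^p \geq 0$, we have $f^+ = \lambda |u|^p$. Thus Theorem \ref{ThmNormalDerABPfullyNonlinearOmega} yields
\begin{equation*}
\inf_{\partial \Omega} \left| \frac{\partial u}{\partial \nu} \right| \leq C \left( \int_{\Omega \cap \Gamma_u} \bigl(\lambda |u|^p\bigr)^n \, dx \right)^{1/n} = C \lambda \left( \int_{\Omega \cap \Gamma_u} |u|^{pn} \, dx \right)^{1/n} = C \lambda \, \|u^p\|_{L^n(\Omega \cap \Gamma_u)},
\end{equation*}
where the constant $C$ depends only on $n, \vartheta, \varTheta$, as in Theorem \ref{ThmNormalDerABPfullyNonlinearOmega}. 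Rearranging gives exactly \eqref{CorDirichletEigenFullyNonlStatementEq}.

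There is essentially no serious obstacle here: the result is a direct corollary of Theorem \ref{ThmNormalDerABPfullyNonlinearOmega}, with the only small bookkeeping being the identification $f^+ = \lambda|u|^p$ (which uses $\lambda \geq 0$) and the factoring of $\lambda$ out of the $L^n$ norm. The substantive work was already carried out in proving the ABP-type normal derivative estimate for Pucci's operators; the eigenvalue inequality is simply what one obtains by specializing the right-hand side to an eigenfunction equation and solving for $\lambda$. The same scheme will be used for the Robin problem \eqref{RobinEigenvalueFullyNonlinear}, where one additionally uses $\inf_{\partial\Omega}|\partial u/\partial\nu| = |\alpha|\inf_{\partial\Omega}|u|$ as in Corollary \ref{CorEigenvalueIneqLaplRobin}.
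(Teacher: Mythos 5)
Your approach is correct and is exactly the paper's own (the paper simply states that the corollary is a direct consequence of Theorem \ref{ThmNormalDerABPfullyNonlinearOmega}); one only takes $f=\lambda|u|^p$, notes $f^+=\lambda|u|^p$, and factors $\lambda$ out of the $L^n$-norm. One small slip: your parenthetical ``otherwise there is nothing to prove, as the bound is trivially satisfied'' for $\lambda<0$ is backwards --- if $\lambda<0$ the stated bound $\lambda\ge(\text{positive})$ would be \emph{false}, not trivially true; the correct observation is that $\lambda\le0$ forces $f^+\equiv0$, so the theorem would give $\inf_{\partial\Omega}|\partial u/\partial\nu|\le0$, contradicting the hypothesis $\inf_{\partial\Omega}|\partial u/\partial\nu|>0$, and hence $\lambda>0$ automatically.
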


$ \\ $

\begin{corollary}\label{CorRobinEigenFullyNonlinear}
Let $ u \in \mathscr{M}^- ( \vartheta, \varTheta, \lambda | u |^p ) \cap C^1( \overline{\Omega}) $ be a viscosity supersolution of \eqref{RobinEigenvalueFullyNonlinear} and assume that $ \inf_{\partial \Omega} \left| u \right| >0 $.

Then
\begin{equation}\label{CorRobinEigenFullyNonlStatementEq}
\lambda \geq \frac{| \alpha| \inf_{\partial \Omega} | u | }{ C || u^p ||_{L^n(\Omega \cap \Gamma_u)}}
\end{equation}
\end{corollary}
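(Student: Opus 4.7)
The plan is to mimic the proof of Corollary \ref{CorDirichletEigenFullyNonlinear}, replacing the Dirichlet-based identification $\left|\frac{\partial u}{\partial \nu}\right|=|\nabla u|$ on $\partial\Omega$ by the Robin identity. Specifically, the boundary condition $\frac{\partial u}{\partial \nu}+\alpha u=0$ on $\partial\Omega$ gives
\begin{equation*}
\left|\frac{\partial u}{\partial \nu}\right|=|\alpha|\,|u|\quad\text{on }\partial\Omega,
\end{equation*}
so the hypothesis $\inf_{\partial\Omega}|u|>0$ (together with $\alpha\neq 0$, otherwise the statement is vacuous) translates exactly into the hypothesis $\inf_{\partial\Omega}\left|\frac{\partial u}{\partial \nu}\right|>0$ needed to invoke Theorem \ref{ThmNormalDerABPfullyNonlinearOmega}.

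Next, since $u$ is a viscosity supersolution of \eqref{RobinEigenvalueFullyNonlinear}, one has $u\in\mathscr{M}^-(\vartheta,\varTheta,\lambda|u|^p)$ with source $f(x)=\lambda|u(x)|^p$. Assuming $\lambda\geq 0$ (eigenvalues of interest are positive; if $\lambda\leq 0$ there is nothing to prove), we get $f^+=\lambda|u|^p$. Applying Theorem \ref{ThmNormalDerABPfullyNonlinearOmega} then yields
\begin{equation*}
\inf_{\partial\Omega}\left|\frac{\partial u}{\partial \nu}\right|\leq C\left(\int_{\Omega\cap\Gamma_u}(\lambda|u|^p)^n\right)^{1/n}=C\,\lambda\,\|u^p\|_{L^n(\Omega\cap\Gamma_u)},
\end{equation*}
with $C=C(n,\vartheta,\varTheta)$ as in Theorem \ref{ThmNormalDerABPfullyNonlinearOmega}.

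Combining the two displays, I obtain
\begin{equation*}
|\alpha|\inf_{\partial\Omega}|u|\leq C\,\lambda\,\|u^p\|_{L^n(\Omega\cap\Gamma_u)},
\end{equation*}
which rearranges to \eqref{CorRobinEigenFullyNonlStatementEq}. There is essentially no obstacle here beyond bookkeeping: the only subtlety is confirming that the Robin relation lets us replace $\left|\frac{\partial u}{\partial \nu}\right|$ by $|\alpha||u|$ on the boundary (this uses nothing more than the boundary equation itself, unlike the Dirichlet case which additionally needed $\nabla u\parallel\nu$), and that the sign convention makes $f^+=\lambda|u|^p$ in the regime of interest. The entire argument is a two-line application of Theorem \ref{ThmNormalDerABPfullyNonlinearOmega} followed by the substitution on $\partial\Omega$.
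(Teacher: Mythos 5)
Your proof is correct and is exactly the argument the paper intends (the paper only states that the corollary is a direct consequence of Theorem \ref{ThmNormalDerABPfullyNonlinearOmega}; you have filled in the Robin substitution $\left|\frac{\partial u}{\partial\nu}\right|=|\alpha|\,|u|$ on $\partial\Omega$, mirroring Corollary \ref{CorEigenvalueIneqLaplRobin}). One tiny remark: rather than saying ``if $\lambda\leq 0$ there is nothing to prove,'' note that $\lambda\leq 0$ would force $f^+\equiv 0$, which via Theorem \ref{ThmNormalDerABPfullyNonlinearOmega} contradicts $\inf_{\partial\Omega}\left|\frac{\partial u}{\partial\nu}\right|>0$, so the hypotheses themselves rule out $\lambda\leq 0$.
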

$ \\ $

The proof of the above Corollaries are direct consequences of Theorem \ref{ThmNormalDerABPfullyNonlinearOmega}.

$ \\ $

\begin{remark}\label{RmkEigenvIneqGeneralFullyNonlinear} Note that Corollaries \ref{CorDirichletEigenFullyNonlinear} and \ref{CorRobinEigenFullyNonlinear} can be generalized to the eigenvalue problems
\begin{equation}\label{RmkDirichletEigenvalueGenFullyNonlinear}
\begin{cases}
F ( \nabla^2 u) = \lambda | u |^p \;\;,\; \textrm{in} \;\: \Omega \\ \;\;\;\;\;\;\;\;\;\;\;\;\; u = 0 \;\;\;\;\;,\; \textrm{on} \;\: \partial \Omega
\end{cases}
\end{equation}
and the Robin eigenvalue problem
\begin{equation}\label{RmkRobinEigenvalueGenFullyNonlinear}
\begin{cases}
F ( \nabla^2 u ) = \lambda | u |^p \;\;,\; \textrm{in} \;\: \Omega \\ \;\;\;\;\; \frac{\partial u}{\partial \nu} + \alpha u = 0 \;\;\;,\; \textrm{on} \;\: \partial \Omega
\end{cases}
\end{equation}
respectively, via Corollary \ref{CorNormalDerABPfullyNonl}. Here $ F $ is elliptic with ellipticity constants $ \vartheta \;,\: \varTheta $ and $ F(0)=0 $. Additionally, as noted previously, the right hand side for the eigenvalue problems \eqref{RmkDirichletEigenvalueGenFullyNonlinear} and \eqref{RmkRobinEigenvalueGenFullyNonlinear} could be slightly modified and obtain similar bounds.
\end{remark}

$ \\ \\ $

\textbf{Acknowledgments:} I wish to thank professor N. Alikakos for suggesting me to read the elegant proof of isoperimetric inequality proved by X. Cabré, which motivated a part of this work. Also, I would like to thank professor X. Cabré and professor N. Le for their comments in a previous version of this work that led to various improvements. Finally, the author acknowledges the ``Basic research Financing'' under the National Recovery and Resilience Plan ``Greece 2.0'' funded by the European Union-NextGeneration EU (H.F.R.I. Project Number: 016097).

%

$ \\ $

\end{document}